\newtheorem{thm}{Theorem}[section]
\newtheorem{lem}[thm]{Lemma}
\newtheorem{prop}[thm]{Proposition}
\newtheorem{rem}[thm]{Remark}
\numberwithin{equation}{section}
\newcommand{\Om}{{\Omega}}
\newcommand{\R}{{\rm I}\!{\rm R}}
\def\n{\mathbf{n}}
\begin{document} 


\title[]{An indefinite concave-convex equation under a Neumann boundary condition II} 

\vspace{1cm}


\author{Humberto Ramos Quoirin}
\address{H. Ramos Quoirin \newline Universidad de Santiago de Chile, Casilla 307, Correo 2, Santiago, Chile}
\email{\tt humberto.ramos@usach.cl}

\author{Kenichiro Umezu}
\address{K. Umezu \newline Department of Mathematics, Faculty of Education, Ibaraki University, Mito 310-8512, Japan}
\email{\tt kenichiro.umezu.math@vc.ibaraki.ac.jp}

\subjclass[2010]{35J25, 35J61, 35J20, 35B09, 35B32} \keywords{Semilinear elliptic problem, Concave-convex nonlinearity, Positive solution, Subcontinuum, A priori bound, Bifurcation, Topological method}

\thanks{The first author was supported by the FONDECYT grant 1161635}
\thanks{The second author was supported by JSPS KAKENHI Grant Number 15K04945}

\begin{abstract}
We proceed with the investigation of the problem $$-\Delta u = \lambda b(x)|u|^{q-2}u +a(x)|u|^{p-2}u \mbox{ in } \Omega, \quad \frac{\partial u}{\partial \n} = 0  \mbox{ on } \partial \Omega, \leqno{(P_\lambda)}  $$
where $\Omega$ is a bounded smooth domain in $\R^N$ ($N \geq2$), $1<q<2<p$, $\lambda \in \R$, and $a,b \in C^\alpha(\overline{\Omega})$ with $0<\alpha<1$. Dealing now with the case $b \geq 0$, $b \not \equiv 0$, we show the existence (and several properties) of a unbounded subcontinuum of nontrivial non-negative solutions of $(P_\lambda)$. Our approach is based on {\it a priori} bounds, a regularization procedure, and Whyburn's topological method. 
\end{abstract}


\maketitle



\section{Introduction and statements of main results}  \label{sec:intro}
\medskip

Let $\Omega$ be a bounded domain of $\R^N$ ($N\geq 2$) with smooth boundary $\partial \Omega$. This paper is devoted to the study of nontrivial non-negative solutions for the problem
$$ 
\begin{cases}
-\Delta u = \lambda b(x)u^{q-1} + a (x) u^{p-1}  & \mbox{in $\Omega$}, \\
\frac{\partial u}{\partial \mathbf{n}} = 0 & \mbox{on $\partial \Omega$},
\end{cases} \eqno{(P_\lambda)}  
$$
where 
\begin{itemize}
  \item $\Delta = \sum_{j=1}^N \frac{\partial^2}{\partial x_j^2}$ is the usual Laplacian in $\R^N$;

  \item $\lambda \in \mathbb{R}$;

  \item $1<q<2<p<\infty$;

  \item $a,b \in C^\alpha (\overline{\Omega})$ for some $\alpha \in (0,1)$, $a,b\not\equiv 0$, and $b\geq 0$;

  \item $\mathbf{n}$ is the unit outer normal to the boundary $\partial \Omega$.
\end{itemize}

By a {\it nonnegative (classical) solution} of $(P_\lambda)$ we mean a nonnegative function $u \in C^{2+\theta}(\overline{\Omega})$ for some $\theta \in (0,1)$ which satisfies $(P_\lambda)$ in the classical sense.  When $\lambda \geq 0$, the strong maximum principle and the boundary point lemma apply to $(P_\lambda)$, and as a consequence any non-trivial nonnegative solution of $(P_\lambda)$ is positive on $\overline{\Omega}$. In the sequel we call it a {\it positive solution} of $(P_\lambda)$. 

In this article, we proceed with the investigation of $(P_\lambda)$ made in \cite{RQU4}. We are now concerned with the case where $b\geq 0$ and we investigate the existence of a unbounded subcontinuum $\mathcal{C}_0 = \{ (\lambda, u) \}$ of nontrivial non-negative solutions of $(P_\lambda)$, bifurcating from the trivial line $\{ (\lambda, 0) \}$. Note that since $q<2$ the nonlinearity in $(P_\lambda)$ is not differentiable at $u=0$, so that we can not apply the standard local bifurcation theory \cite{CR73} directly. When $a\equiv 0$, $\Gamma_0 = \{ (0,c) : \mbox{$c$ is a positive constant} \}$ is a continuum of positive solutions of $(P_\lambda)$ bifurcating at $(0,0)$, and there is no positive solution for any $\lambda \not= 0$. Throughout this paper we shall then assume $a\not\equiv 0$, and we shall observe that the existence and behavior of $\mathcal{C}_0$ depend on the sign of $a$. 

To state our main results we introduce the following sets: 
\begin{align*}
\Omega^a_\pm = \{ x \in \Omega : a(x)\gtrless 0 \}, \quad 
\Omega^b_+ =\{ x \in \Omega : b(x) > 0 \}.  
\end{align*}
We remark that $\Omega^a_\pm$, $\Omega^b_+$ are all open subsets of $\Omega$. We shall use the following conditions on these sets:
\begin{enumerate}

\item[$(H_1)$] $\Omega^a_\pm$ are both smooth subdomains of $\Omega$, with either 
\begin{align}
& \overline{\Omega^a_+} \subset \Omega \ \ \mbox{and} \ \ \Omega = \overline{\Omega^a_+} \cup \Omega^a_-, \ \ \mbox{or} \label{H1i} \\
& \overline{\Omega^a_-} \subset \Omega \ \ \mbox{and} \ \ \Omega = \overline{\Omega^a_-} \cup \Omega^a_+.  \label{H1ii} 
\end{align}

\item[$(H_2)$] Under $(H_1)$ there exist a function $\alpha^+$ which is continuous, positive, and bounded away from zero in a tubular neighborhood of $\partial \Omega^a_+$ in $\Omega^a_+$ and $\gamma>0$ such that 
\begin{align*} 
a^+(x) = \alpha^+ (x) \, {\rm dist} (x, \partial \Omega^a_+)^{\gamma},
\end{align*}
where ${\rm dist}\, (x, A)$ denotes the distance function to a set $A$, and moreover, 
\begin{align*} 
2 < p < \min \left\{ \frac{2N}{N-2}, \frac{2N+\gamma}{N-1} \right\} \quad 
\mbox{if} \ \ N>2. 
\end{align*}
\end{enumerate}

Assumptions $(H_1)$ and $(H_2)$ are used to obtain {\it a priori} bounds on positive solutions of $(Q_{\lambda, \epsilon})$ below, cf. Amann and L\'opez-G\'omez \cite{ALG98}.
\begin{rem}{\rm 
In $(H_1)$ we may allow $\Omega^a_+ = \emptyset$ (respect.\ $\Omega^a_- = \emptyset$). In this case it is understood that $\Omega = \Omega^a_-$ (respect.\ $\Omega = \Omega^a_+$). 
}\end{rem}

Let us recall that a positive solution $u$ of $(P_\lambda)$ is said to be
{\it asymptotically stable} (respect. {\it unstable}) if $\gamma_1(\lambda,u)>0$ (respect. $<0$), where
$\gamma_1(\lambda, u)$ is the smallest eigenvalue of the linearized eigenvalue problem at $u$, namely, 
\begin{align}  \label{eigenvp:ulam}
\begin{cases}
-\Delta \phi =  \lambda (q-1) b(x) u^{q-2}\phi +(p-1)a(x)u^{p-2}\phi + \gamma \phi 
& \mbox{in $\Omega$}, \\
\frac{\partial \phi}{\partial \mathbf{n}} = 0 & \mbox{on
$\partial \Omega$}.
\end{cases}
\end{align}
In addition, $u$ is said to be {\it weakly stable} if $\gamma_1(\lambda,u)\geq 0$.

First we state a result on the existence of a unbounded subcontinuum of nontrivial non-negative solutions of $(P_\lambda)$, and its behavior and stability in the case $\int_\Omega a \geq 0$.

\begin{thm} \label{thm:ageq0}
Assume $\int_\Omega a \geq 0$, and $p\leq \frac{2N}{N-2}$ if $N>2$. Then $(P_\lambda)$ possesses a unbounded subcontinuum of 
non-negative solutions $\mathcal{C}_0 = \{ (\lambda, u) \} \subset \R \times C(\overline{\Omega})$ bifurcating at $(0,0)$. 
Moreover, the following assertions hold: 

\begin{enumerate}

\item There is no positive solution of $(P_\lambda)$ for any $\lambda \geq 0$. Consequently, if $(\lambda, u) \in \mathcal{C}_0 \setminus \{ (0,0) \}$ then $\lambda < 0$. \\ 

\item Any positive solution of $(P_\lambda)$ is unstable. \\ 

\item $\mathcal{C}_0 \cap \{ (\lambda, 0) : \lambda \not= 0 \} = \emptyset$. More precisely, for any $\Lambda > 0$ there exists $\delta_0 > 0$ such that $\max_{\overline{\Omega}} u > \delta_0$ for all nontrivial non-negative solutions of $(P_\lambda)$ with $\lambda \leq -\Lambda$. \\

\item If $(H_1)$ and $(H_2)$ hold then for any $\Lambda > 0$ there exists $C_\Lambda > 0$ such that $\max_{\overline{\Omega}}u \leq C_\Lambda$ for all $(\lambda, u) \in \mathcal{C}_0$ with $\lambda \in [-\Lambda, 0)$. Consequently, 
\begin{align*}
\{ \lambda \in \R : (\lambda, u) \in \mathcal{C}_0\setminus \{ (0,0) \} \} = (-\infty, 0). 
\end{align*}
In this case, $(P_\lambda)$ has at least one nontrivial non-negative solution for every $\lambda < 0$, see Figure \ref{fig16_0407a}.

\end{enumerate}

\end{thm}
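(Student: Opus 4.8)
The plan is to construct $\mathcal{C}_0$ via a regularization-and-limit argument, following the strategy the authors attribute to Whyburn's topological method. First I would introduce the regularized family of problems
\begin{align*}
(Q_{\lambda,\epsilon}) \quad \begin{cases} -\Delta u = \lambda b(x)(u+\epsilon)^{q-1} + a(x)u^{p-1} & \text{in } \Omega,\\ \frac{\partial u}{\partial \mathbf{n}} = 0 & \text{on } \partial\Omega, \end{cases}
\end{align*}
for $\epsilon>0$, whose nonlinearity is now $C^1$ near $u=0$, so that classical bifurcation theory \cite{CR73} applies at the first eigenvalue of the relevant linearization (here $\lambda=0$, since the $b$-term contributes a zeroth-order coefficient $\lambda(q-1)b(x)\epsilon^{q-2}$ whose principal eigenvalue vanishes precisely at $\lambda=0$). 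From \cite{CR73} I would get, for each $\epsilon$, a continuum $\mathcal{C}_\epsilon$ of positive solutions of $(Q_{\lambda,\epsilon})$ emanating from $(0,0)$; a Rabinowitz-type global alternative then forces $\mathcal{C}_\epsilon$ to be unbounded in $\R\times C(\overline{\Omega})$ (it cannot return to another trivial bifurcation point because $\lambda=0$ is the only candidate). The heart of the proof is then to pass to the limit $\epsilon\to 0^+$ and extract a limiting connected set: one shows the $\mathcal{C}_\epsilon$ are uniformly bounded on compact $\lambda$-intervals away from the $u\equiv 0$ axis, invokes the superior-limit (liminf/limsup) machinery of Whyburn to produce $\mathcal{C}_0 := \limsup_{\epsilon\to 0}\mathcal{C}_\epsilon$, and verifies it is a connected, closed, unbounded set of solutions of $(P_\lambda)$ through $(0,0)$.

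For the moreover-assertions I would proceed as follows. For (1), the nonexistence of positive solutions when $\lambda\geq 0$ and $\int_\Omega a\geq 0$: integrate the equation over $\Omega$ using the Neumann condition to get $0 = \lambda\int_\Omega b u^{q-1} + \int_\Omega a u^{p-1}$; testing instead against $u^{-1}$ or an appropriate power, or combining with the variational characterization, forces a sign contradiction — more precisely, dividing by $u$ and integrating, or testing the equation with $1$ and comparing with a test against $u$, yields that a positive solution cannot coexist with $\lambda\geq 0$, $\int_\Omega a\geq 0$ unless $u$ is constant, which is then excluded by $a\not\equiv 0$. The subcontinuum statement about $\lambda$ follows because $(0,0)$ is the only point of $\mathcal{C}_0$ with $\lambda\geq 0$. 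For (2), instability: plug $\phi=1$ (or $\phi=u$) as a test function in the Rayleigh quotient for $\gamma_1(\lambda,u)$; using $\lambda<0$, $b\geq 0$, $1<q<2<p$ and the equation satisfied by $u$, one shows $\gamma_1(\lambda,u)<0$ by exhibiting a test function along which the quadratic form is negative — the convex term $(p-1)a u^{p-2}$ dominates after subtracting the equation for $u$.

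For (3), the separation from the trivial line away from $\lambda=0$: suppose for contradiction there is a sequence $(\lambda_n,u_n)$ of nontrivial non-negative solutions with $\lambda_n\leq -\Lambda$ and $\|u_n\|_\infty\to 0$. Rescale $v_n = u_n/\|u_n\|_\infty$; then $v_n$ solves $-\Delta v_n = \lambda_n b u_n^{q-2} v_n/\|u_n\|_\infty^{?} + \dots$ — more carefully, dividing the equation by $\|u_n\|_\infty^{q-1}$ makes the concave term $\lambda_n b v_n^{q-1}$ of order one while the convex term $a\|u_n\|_\infty^{p-q} v_n^{p-1}\to 0$; elliptic estimates give $v_n\to v\geq 0$, $\|v\|_\infty=1$, solving $-\Delta v = \lambda_\infty b v^{q-1}$ (after extracting $\lambda_n\to\lambda_\infty\in[-\infty,-\Lambda]$), and integrating over $\Omega$ gives $\lambda_\infty\int_\Omega b v^{q-1}=0$, impossible since $b\geq 0$, $b\not\equiv 0$, $v\gneq 0$ (by the maximum principle $v>0$), and $\lambda_\infty<0$. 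This also handles the case $\lambda_n\to-\infty$ by the same integral identity. For (4), the a priori bound on compact $\lambda$-intervals under $(H_1)$–$(H_2)$: this is where the Amann–López-Gómez \cite{ALG98} blow-up technique enters, using the precise vanishing rate $a^+(x)=\alpha^+(x)\mathrm{dist}(x,\partial\Omega^a_+)^\gamma$ and the subcritical restriction on $p$ to rule out blow-up both in the interior of $\Omega^a_+$ and near $\partial\Omega^a_+$; combined with (3), which prevents the continuum from collapsing onto the trivial line for $\lambda<0$, and the unboundedness of $\mathcal{C}_0$ in $\R\times C(\overline{\Omega})$, it follows that $\mathcal{C}_0$ must project onto all of $(-\infty,0)$, giving existence for every $\lambda<0$.

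The main obstacle I expect is the passage to the limit $\epsilon\to 0$ while preserving connectedness and unboundedness simultaneously: one must rule out that the relevant pieces of $\mathcal{C}_\epsilon$ escape to infinity or shrink to the trivial line as $\epsilon\to 0$, so that the Whyburn superior limit is nonempty, connected and genuinely unbounded. This requires the uniform a priori bounds of assertions (3) and (4) to be established \emph{for the regularized problems $(Q_{\lambda,\epsilon})$ uniformly in $\epsilon$}, not merely for $(P_\lambda)$ — which is precisely why the statement of $(H_2)$ is phrased in terms of $(Q_{\lambda,\epsilon})$. A secondary technical point is checking that the Crandall–Rabinowitz hypotheses (simplicity of the eigenvalue, transversality) hold for the regularized linearization at $(0,0)$ uniformly enough to start the continuum, and that the bifurcation is genuinely into $\lambda<0$.
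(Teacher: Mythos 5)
Your overall strategy is the same as the paper's: regularize, obtain an unbounded continuum of positive solutions of the regularized problem by global bifurcation, and pass to the limit $\epsilon\to0^+$ with Whyburn's theorem, using nonexistence results and a priori bounds (uniform in $\epsilon$) to keep the limit set nontrivial, connected and unbounded. However, your regularization $\lambda b(x)(u+\epsilon)^{q-1}$ has a genuine defect: it does not vanish at $u=0$, so $u\equiv 0$ is no longer a solution of $(Q_{\lambda,\epsilon})$ for $\lambda\neq 0$, and there is no trivial line from which Rabinowitz's global bifurcation theorem can produce a continuum. The paper regularizes as $\lambda b(x)(u+\epsilon)^{q-2}u$, which is analytic in $u$ near $0$, vanishes at $u=0$, and has linearization $\lambda\epsilon^{q-2}b(x)\phi$ at $u=0$, whose principal eigenvalue changes sign exactly at $\lambda=0$; this is what makes the bifurcation-from-zero argument work. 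The fix is immediate, but as written the first step of your construction fails.

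Two smaller points. In assertion (1), ``testing against $u^{-1}$ or an appropriate power'' should be pinned down: the correct choice is to divide the equation by $u^{p-1}$ and integrate, which turns the convex term into $\int_\Omega a$ and lets you exploit the hypothesis $\int_\Omega a\geq 0$ directly; also, constant positive solutions are not excluded by $a\not\equiv 0$ (they occur exactly when $a\equiv cb$ with $c>0$), but for such solutions one reads off $\lambda=-cu^{p-q}<0$ directly, which is how the paper closes that case. In assertion (3), your blow-down to a limiting equation $-\Delta v=\lambda_\infty b v^{q-1}$ is not available when $\lambda_n\to-\infty$: the rescaled right-hand side is then unbounded, so elliptic estimates do not give the compactness you invoke. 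The paper instead normalizes in $H^1$, deduces $\int_\Omega|\nabla v_n|^2\to 0$ from the energy identity (using $\lambda_n\le 0$ and $b\ge 0$), concludes that the limit is a positive constant, and derives the contradiction $b\equiv 0$ from the weak formulation; your closing integral identity is the right idea, but the compactness must be obtained this way rather than through the limiting PDE.
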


\begin{rem}{\rm 
The non-existence result in assertion (1) of Theorem \ref{thm:ageq0} does not require the condition $p\leq \frac{2N}{N-2}$ if $N>2$. 

}\end{rem}

	\begin{figure}[!htb]
{\unitlength 0.1in
\begin{picture}( 20.9100, 19.3800)( 16.9000,-20.8000)
%
\special{pn 8}%
\special{pa 1698 1782}%
\special{pa 3782 1782}%
\special{fp}%
\special{sh 1}%
\special{pa 3782 1782}%
\special{pa 3714 1762}%
\special{pa 3728 1782}%
\special{pa 3714 1802}%
\special{pa 3782 1782}%
\special{fp}%
%
\special{pn 8}%
\special{pa 3238 2080}%
\special{pa 3238 364}%
\special{fp}%
\special{sh 1}%
\special{pa 3238 364}%
\special{pa 3218 430}%
\special{pa 3238 416}%
\special{pa 3258 430}%
\special{pa 3238 364}%
\special{fp}%
\put(30.9000,-19.2200){\makebox(0,0){O}}%
\put(37.7000,-15.7900){\makebox(0,0){$\lambda$}}%
\put(32.2900,-2.2200){\makebox(0,0){$\max_{\overline{\Omega}}u$}}%
\put(26.2100,-8.0900){\makebox(0,0){$\mathcal{C}_0$}}%
%
\special{pn 20}%
\special{pa 3238 1774}%
\special{pa 3226 1744}%
\special{pa 3216 1714}%
\special{pa 3194 1654}%
\special{pa 3170 1594}%
\special{pa 3156 1564}%
\special{pa 3128 1506}%
\special{pa 3080 1422}%
\special{pa 3062 1394}%
\special{pa 3044 1368}%
\special{pa 3004 1316}%
\special{pa 2982 1292}%
\special{pa 2962 1268}%
\special{pa 2916 1222}%
\special{pa 2892 1200}%
\special{pa 2866 1180}%
\special{pa 2842 1160}%
\special{pa 2816 1142}%
\special{pa 2788 1124}%
\special{pa 2732 1092}%
\special{pa 2704 1078}%
\special{pa 2674 1064}%
\special{pa 2614 1040}%
\special{pa 2584 1030}%
\special{pa 2552 1020}%
\special{pa 2522 1012}%
\special{pa 2458 996}%
\special{pa 2426 990}%
\special{pa 2362 982}%
\special{pa 2328 978}%
\special{pa 2264 974}%
\special{pa 2200 974}%
\special{pa 2166 972}%
\special{pa 2134 972}%
\special{pa 2070 970}%
\special{pa 2040 968}%
\special{pa 1976 960}%
\special{pa 1946 954}%
\special{pa 1914 946}%
\special{pa 1854 928}%
\special{pa 1824 918}%
\special{pa 1734 882}%
\special{pa 1704 868}%
\special{pa 1690 862}%
\special{fp}%
%
\special{pn 4}%
\special{pa 2680 1580}%
\special{pa 2480 1780}%
\special{fp}%
\special{pa 2620 1580}%
\special{pa 2420 1780}%
\special{fp}%
\special{pa 2560 1580}%
\special{pa 2360 1780}%
\special{fp}%
\special{pa 2500 1580}%
\special{pa 2300 1780}%
\special{fp}%
\special{pa 2440 1580}%
\special{pa 2240 1780}%
\special{fp}%
\special{pa 2380 1580}%
\special{pa 2180 1780}%
\special{fp}%
\special{pa 2320 1580}%
\special{pa 2120 1780}%
\special{fp}%
\special{pa 2260 1580}%
\special{pa 2060 1780}%
\special{fp}%
\special{pa 2200 1580}%
\special{pa 2000 1780}%
\special{fp}%
\special{pa 2140 1580}%
\special{pa 1940 1780}%
\special{fp}%
\special{pa 2080 1580}%
\special{pa 1880 1780}%
\special{fp}%
\special{pa 2020 1580}%
\special{pa 1820 1780}%
\special{fp}%
\special{pa 1960 1580}%
\special{pa 1760 1780}%
\special{fp}%
\special{pa 1900 1580}%
\special{pa 1710 1770}%
\special{fp}%
\special{pa 1840 1580}%
\special{pa 1700 1720}%
\special{fp}%
\special{pa 1780 1580}%
\special{pa 1700 1660}%
\special{fp}%
\special{pa 2740 1580}%
\special{pa 2540 1780}%
\special{fp}%
\special{pa 2800 1580}%
\special{pa 2600 1780}%
\special{fp}%
\special{pa 2860 1580}%
\special{pa 2660 1780}%
\special{fp}%
\special{pa 2920 1580}%
\special{pa 2720 1780}%
\special{fp}%
\special{pa 2980 1580}%
\special{pa 2780 1780}%
\special{fp}%
\special{pa 3030 1590}%
\special{pa 2840 1780}%
\special{fp}%
\special{pa 3040 1640}%
\special{pa 2900 1780}%
\special{fp}%
\special{pa 3040 1700}%
\special{pa 2960 1780}%
\special{fp}%
\end{picture}}%
	  \caption{A unbounded subcontinuum of nontrivial non-negative solutions in the case $\int_\Omega a \geq 0$.}   
	\label{fig16_0407a}
	    \end{figure} 

To state our result corresponding to Theorem \ref{thm:ageq0} in the case $\int_\Omega a < 0$ we consider the following eigenvalue problem:
\begin{align} \label{eigenpro:chap2}
\begin{cases}
-\Delta \phi = \lambda b(x) \phi + \sigma \phi & \mbox{in $\Omega$}, \\
\frac{\partial \phi}{\partial \mathbf{n}} = 0 & \mbox{on $\partial \Omega$}. 
\end{cases}
\end{align}
For $\lambda > 0$ we denote by $\sigma_\lambda$ the smallest eigenvalue of 
\eqref{eigenpro:chap2}, which is simple and principal, and by $\phi_\lambda$ a positive eigenfunction associated with $\sigma_\lambda$. Note that $\sigma_\lambda < 0$. 

We shall deal with the following cases:
\begin{enumerate}
  \item[$(H_{01})$] $\Omega^a_+ \cap \Omega^b_+ \not= \emptyset$. 

  \item[$(H_{02})$] $\Omega^a_+ = \emptyset$. 
\end{enumerate}

\begin{thm} \label{maint} 
Assume $\int_\Omega a < 0$, and $p<\frac{2N}{N-2}$ if $N>2$. 
Then $(P_\lambda)$ possesses a unbounded subcontinuum of non-negative solutions $\mathcal{C}_0 = \{ (\lambda, u) \} \subset \R \times C(\overline{\Omega})$ bifurcating at $(0,0)$ and such that  
$\left( \mathcal{C}_0 \setminus \{ (0,0) \} \right) \cap \left([0,\infty) \times C(\overline{\Omega})\right)$ consists of positive solutions of $(P_\lambda)$. Moreover the following assertions hold:
\begin{enumerate}

\item There exists $\delta_0 > 0$ such that $\max_{\overline{\Omega}}u > \delta_0$ for all nontrivial non-negative solutions of $(P_\lambda)$ with $\lambda \leq 0$. Consequently, $\mathcal{C}_0$ bifurcates to the region $\lambda > 0$ at $(0,0)$ and does not meet $\{ (\lambda, 0): \lambda < 0 \}$. \\

\item Let $\Lambda > 0$. Then there exists $c_\Lambda > 0$ such that $u\geq c_\Lambda \phi_\Lambda$ on $\overline{\Omega}$ for all positive solutions $u$ of $(P_\lambda)$ with $\lambda \geq \Lambda$. Consequently, $\mathcal{C}_0$ does not meet $\{ (\lambda, 0) : \lambda > 0 \}$. \\

\item For some $\Lambda_0 \in (0, \infty]$, $\mathcal{C}_0$ contains $\{ (\lambda, \underline{u}_\lambda): 0<\lambda<\Lambda_0 \}$, where $\underline{u}_\lambda$ is the minimal positive solution  of $(P_\lambda)$ for $\lambda \in (0, \Lambda_0)$, i.e. $\underline{u}_\lambda \leq u$ on $\overline{\Omega}$ for all positive solutions $u$ of $(P_\lambda)$. In addition, we have:\\
\begin{enumerate}

  \item $\lambda \mapsto \underline{u}_\lambda$ is increasing;

  \item $\lambda \mapsto \underline{u}_\lambda$ is $C^\infty$ from $(0, \Lambda_0)$ to $C^{2+\alpha}(\overline{\Omega})$;

  \item $\underline{u}_\lambda \to 0$ and $\lambda^{-\frac{1}{p-q}}\underline{u}_\lambda \to c^*$ in $C^{2+\alpha}(\overline{\Omega})$ as $\lambda \to 0^+$, where $c^*=\left(\frac{\int_\Omega b}{-\int_\Omega a}\right)^{\frac{1}{p-q}}$;

  \item $\underline{u}_\lambda$ is asymptotically stable for $\lambda \in (0, \Lambda_0)$. \\

\end{enumerate} 
Finally, there exists $\delta > 0$ such that if $|\lambda| \leq \delta$ and $u$ is a positive solution of $(P_\lambda)$ such that $\max_{\overline{\Omega}}u \leq \delta$ then $(\lambda, u) \in \mathcal{C}_0$. \\

\item If $(H_{01})$ holds then 
\begin{align} \label{Lam0finite}
\Lambda_0 < \infty. 
\end{align}
Moreover, the following assertions hold:\\

\begin{enumerate}
\item $(P_\lambda)$ has a minimal positive solution $\underline{u}_{\Lambda_0}$ for $\lambda = \Lambda_0$, and $\lambda \mapsto \underline{u}_\lambda$ is continuous from $(0, \Lambda_0]$ to $C^{2+\alpha}(\overline{\Omega})$. \\

\item $\mathcal{C}_0$ consists of a smooth curve around $(\Lambda_0, \underline{u}_{\Lambda_0})$. More precisely, it is given by $(\lambda (s), u(s))$, $|s|<s_1$ (for some $s_1 > 0$)  with $\lambda (0) = \Lambda_0$, $\lambda'(0)=0>\lambda''(0)$, and   $u(0) = \underline{u}_{\Lambda_0}$. Moreover, $u(s)=\underline{u}_{\lambda(s)}$ for $s \in (-s_1, 0]$;\\

\item There is no positive solution of $(P_\lambda)$ for any $\lambda > \Lambda_0$. \\

\item The minimal positive solution $\underline{u}_{\Lambda_0}$ is weakly stable. More precisely, $\gamma_1(\Lambda_0, \underline{u}_{\Lambda_0}) = 0$. \\

\item Any positive solution $u$ of $(P_\lambda)$, except   $ \underline{u}_\lambda$ for $0<\lambda \leq \Lambda_0$, is unstable. In particular, any positive solution $u$ of $(P_\lambda)$ with $(\lambda, u) \in \mathcal{C}_0\setminus \{ (\lambda, \underline{u}_\lambda) : 0< \lambda \leq \Lambda_0 \}$ is unstable. \\

\end{enumerate}

\item If $(H_{02})$ holds then $\Lambda_0 = \infty$. Moreover, the minimal positive solution $\underline{u}_\lambda$ is the only positive solution of $(P_\lambda)$ for $\lambda > 0$. \\

\item If $(H_1)$ and $(H_2)$ hold, then for any $\Lambda > 0$ there exists $C_\Lambda > 0$ such that $\max_{\overline{\Omega}} u \leq C_{\Lambda}$ for all $(\lambda, u) \in \mathcal{C}_0$ with $\lambda \in [-\Lambda, \Lambda]$. 

\end{enumerate}
\end{thm}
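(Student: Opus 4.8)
We sketch a proof following the three-step scheme announced in the abstract: a regularization, \emph{a priori} bounds uniform in the regularization parameter, and Whyburn's limit.

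\textit{Regularization and bifurcation.} Since $s\mapsto s^{q-1}$ is not $C^1$ at $s=0$, for $\epsilon>0$ replace it by a smooth $g_\epsilon$ with $g_\epsilon(0)=0$, $g_\epsilon(s)=s^{q-1}$ for $s\ge\epsilon$, $g_\epsilon'(0)<\infty$ (e.g.\ $g_\epsilon(s)=(s+\epsilon)^{q-1}-\epsilon^{q-1}$), and consider $(Q_{\lambda,\epsilon})\colon-\Delta u=\lambda b(x)g_\epsilon(u)+a(x)u_+^{p-1}$ with the Neumann boundary condition; its Nemytskii operator is $C^1$ and $u\equiv0$ is a solution for every $\lambda$. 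Recast as a completely continuous fixed‑point equation $u=(-\Delta+1)^{-1}F_\epsilon(\lambda,u)$ in $C(\overline\Om)$, the linearization at $u\equiv0$ degenerates precisely when $\lambda\,g_\epsilon'(0)$ meets an eigenvalue of $-\Delta$ weighted by $b$ (Neumann); the principal such eigenvalue is $0$, simple, with constant eigenfunction, so bifurcation from a simple eigenvalue \cite{CR73}, in its unilateral cone version, produces a local branch of non‑negative solutions issuing from $(0,0)$, its direction into $\lambda>0$ being fixed by the reduced equation $\int_\Om\bigl(b\,c^{q-1}+a\,c^{p-1}\bigr)=0$, which has the positive root $c=c^*$ precisely because $\int_\Om a<0<\int_\Om b$. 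The global alternative together with the strong maximum principle (solutions being positive where $\lambda>0$) yields an unbounded subcontinuum $\mathcal C_\epsilon\subset\R\times C(\overline\Om)$ of non‑negative solutions of $(Q_{\lambda,\epsilon})$ emanating from $(0,0)$.

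\textit{Uniform a priori bounds (the crux).} For solutions of $(Q_{\lambda,\epsilon})$ with $\lambda\in[-\Lambda,\Lambda]$, the upper bound $\|u\|_\infty\le C_\Lambda$ uniform in $\epsilon$ is obtained from $(H_1)$--$(H_2)$ by the blow‑up/rescaling technique of Amann and L\'opez-G\'omez \cite{ALG98} adapted to $(Q_{\lambda,\epsilon})$: the concave term $\lambda b\,g_\epsilon(u)$ is subordinate (for $\lambda\le0$ it has an absorptive sign, and where $u$ is large one has $g_\epsilon(u)=u^{q-1}$, which removes the $\epsilon$‑dependence), so the essential difficulty is the indefinite superlinear term near the nodal set $\partial\Om^a_+$, where the flatness exponent $\gamma$ and the restriction $p<(2N+\gamma)/(N-1)$ are exactly what forces the rescaled limit problem on a half‑space to admit no bounded positive solution, while $p<2N/(N-2)$ supplies the compactness to run the blow‑up. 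The lower bound of assertion (1) is separate: given a putative sequence of nontrivial non‑negative solutions with $\lambda_n\le0$ and $\max u_n\to0$, rescale $v_n=u_n/\max u_n$; testing the equation with $v_n$ and using $\lambda_n\le0$, $b\ge0$ gives $\int_\Om|\nabla v_n|^2\to0$, and since $\max v_n=1$ while $-\Delta v_n\le\|a\|_\infty(\max u_n)^{p-2}\to0$, a mean‑value argument forces $v_n\to1$; then dividing the integrated identity $\int_\Om a\,u_n^{p-1}=|\lambda_n|\int_\Om b\,u_n^{q-1}$ by $(\max u_n)^{p-1}$ and letting $n\to\infty$ yields $\int_\Om a\ge0$, contradicting $\int_\Om a<0$.

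\textit{Passage to the limit and assertions (2)--(3).} With these bounds $\bigcup_{0<\epsilon<1}\mathcal C_\epsilon$ is relatively compact on bounded $\lambda$‑strips and each $\mathcal C_\epsilon$ clusters at $(0,0)$, so Whyburn's superior‑limit lemma gives that $\mathcal C_0:=\limsup_{\epsilon\to0}\mathcal C_\epsilon$ is connected, unbounded, contains $(0,0)$, and — since away from $(0,0)$ the solutions on $\mathcal C_\epsilon$ stay at scale $\gg\epsilon$ (by (1) when $\lambda\le0$, by the order $\lambda^{1/(p-q)}$ of $\underline u_\lambda$ when $\lambda>0$), where $g_\epsilon(u)=u^{q-1}$, and by elliptic regularity — consists of non‑negative solutions of $(P_\lambda)$, positive where $\lambda\ge0$. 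For (2): any positive solution $u$ of $(P_\lambda)$ satisfies $u\ge\underline u_\lambda\ge\underline u_\Lambda$ for $\lambda\ge\Lambda$, and $\underline u_\Lambda>0$ on $\overline\Om$ gives $u\ge c_\Lambda\phi_\Lambda$ with $c_\Lambda=\min_{\overline\Om}(\underline u_\Lambda/\phi_\Lambda)$. For (3): for small $\lambda>0$, sub‑ and supersolutions of the form $c^*\lambda^{1/(p-q)}\pm\lambda^{\theta}\psi$ (with $\psi,\theta$ read off from a Lyapunov--Schmidt expansion of the reduced equation) yield the minimal solution $\underline u_\lambda$ with $\lambda^{-1/(p-q)}\underline u_\lambda\to c^*$ in $C^{2+\alpha}(\overline\Om)$; monotonicity is immediate since $b\ge0$ makes $\underline u_{\lambda_1}$ a subsolution at $\lambda_2>\lambda_1$; a minimal sub--supersolution is weakly stable, and $\gamma_1(\lambda,\underline u_\lambda)>0$ for $\lambda<\Lambda_0$ (else a secondary bifurcation occurs), so $\lambda\mapsto\underline u_\lambda$ is $C^\infty$ by the implicit function theorem ($\underline u_\lambda>0$ making the nonlinearity smooth at it); the last sentence of (3) is the local uniqueness of the positive solution branch near $(0,0)$.

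\textit{Assertions (4)--(6) and the main obstacle.} For (4): under $(H_{01})$, restricting $(P_\lambda)$ to a ball $\omega\Subset\Om^a_+\cap\Om^b_+$ (where $a,b\ge c_0>0$) and comparing with the concave--convex Dirichlet problem $-\Delta w=c_0(\lambda w^{q-1}+w^{p-1})$ in $\omega$, $w=0$ on $\partial\omega$ — which, being of Ambrosetti--Brezis--Cerami type, has no positive solution for large $\lambda$ — forces $\Lambda_0<\infty$; monotone convergence gives $\underline u_{\Lambda_0}$ and continuity of $\lambda\mapsto\underline u_\lambda$ on $(0,\Lambda_0]$; at $(\Lambda_0,\underline u_{\Lambda_0})$ one has $\gamma_1=0$ with one‑dimensional kernel spanned by a positive eigenfunction, and a Crandall--Rabinowitz analysis yields the smooth fold $(\lambda(s),u(s))$ with $\lambda'(0)=0>\lambda''(0)$, the sign of $\lambda''(0)$ coming from a second‑order identity that uses $q<2<p$; nonexistence for $\lambda>\Lambda_0$ is again the comparison, and the instability of every positive solution other than the $\underline u_\lambda$'s follows by a sweeping/monotone‑separation argument between branches. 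For (5): under $(H_{02})$, i.e.\ $a\le0$, the term $a\,u_+^{p-1}$ is absorptive, so a subsolution of order $\lambda^{1/(p-q)}$ and an arbitrarily large constant supersolution exist for every $\lambda>0$, giving $\Lambda_0=\infty$, and uniqueness is the Brezis--Oswald criterion since $s\mapsto(\lambda b\,s^{q-1}+a\,s^{p-1})/s$ is strictly decreasing on $(0,\infty)$. Finally, (6) is just the collation, along $\mathcal C_0$, of the $\epsilon$‑uniform bounds established in the second step. The two places where real work is needed are (a) the \emph{a priori} bound uniform in $\epsilon$, and inside it the indefinite‑superlinear blow‑up near $\partial\Om^a_+$ controlled by $(H_2)$, and (b) the fold analysis at $\Lambda_0$, where one must verify that the kernel of the linearization is one‑dimensional and positive and extract the sign of $\lambda''(0)$.
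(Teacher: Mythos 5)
Your overall architecture (regularize, prove $\epsilon$-uniform \emph{a priori} bounds, pass to the limit via Whyburn, then identify the minimal branch, the fold at $\Lambda_0$ under $(H_{01})$, and uniqueness under $(H_{02})$) matches the paper's, and most individual steps are either the paper's own arguments or acceptable standard variants: your rescaled lower-bound argument for assertion (1) is essentially Proposition \ref{prop:a<0:no} normalized by $\max u_n$ instead of $\|u_n\|$; your comparison with an Ambrosetti--Brezis--Cerami Dirichlet problem on a ball in $\Omega^a_+\cap\Omega^b_+$ is the substance of Proposition \ref{bound:lam}; and your blow-up discussion for the uniform upper bound is the content of Proposition \ref{bound:norm}. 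One genuine methodological difference: for the uniqueness in assertion (5) you invoke the Brezis--Oswald criterion, whereas the paper runs Amann's fixed-point argument with the strongly positive resolvent $K$ and the operator $F_\omega$ satisfying \eqref{F:inc}--\eqref{F:subl}. Your route is viable, but note that $s\mapsto \lambda b s^{q-2}+a s^{p-2}$ is only \emph{non}-increasing (it is constant where $a=b=0$), so you must finish as Brezis--Oswald do, by first deducing $u_1=cu_2$ from the vanishing of the gradient terms and then forcing $c=1$ using $a\not\equiv 0$; the paper's $\tau$-sublinearity argument avoids this extra step.

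There is, however, a genuine gap in your proof that $\Lambda_0=\infty$ under $(H_{02})$. You claim that ``an arbitrarily large constant supersolution exists for every $\lambda>0$'' because $a\le 0$ makes the convex term absorptive. A constant $C$ is a supersolution only if $0\ge \lambda b(x)C^{q-1}+a(x)C^{p-1}$ throughout $\Omega$, and $(H_{02})$ (i.e.\ $\Omega^a_+=\emptyset$) does not prevent $\{a=0\}\cap\Omega^b_+\neq\emptyset$; at such a point the inequality reads $0\ge\lambda b(x)C^{q-1}>0$ and fails for every $C$. This is not a removable technicality: the paper's closing remark on the case $q=2$ (Ouyang's result, with $D_0=\Omega\setminus\overline{\Omega^a_-}$ nonempty) shows that in exactly this configuration the naive expectation $\Lambda_0=\infty$ is \emph{false} for $q=2$, so any correct proof for $q<2$ must use the concavity in an essential, non-pointwise way. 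The paper's argument does: it establishes an \emph{a priori} $H^1$ bound for positive solutions on compact $\lambda$-intervals (testing with $u$, using $\int_\Omega au^p\le 0$, normalizing $v_n=u_n/\|u_n\|$ and dividing the weak formulation by $\|u_n\|^{p-1}$ to force $av_0^{p-1}\equiv 0$, contradicting $a\not\equiv 0$), and then continues the nondegenerate minimal branch for all $\lambda>0$ by the implicit function theorem, using the uniqueness already proved. You should replace your supersolution step by such a bound-plus-continuation argument.
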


\begin{rem}{\rm 
\strut
\begin{enumerate}

  \item Assertion (2), assertions (3)(a)-(d) and the uniqueness result in assertion (5) of Theorem \ref{maint} do not require the condition $p<\frac{2N}{N-2}$ if $N>2$. \\

  \item In the case $\int_\Omega a < 0$, it holds under $(H_{01})$, $(H_1)$ and $(H_2)$ that 
\begin{align*}
\{ \lambda \in \R : (\lambda, u) \in \mathcal{C}_0 \} = (-\infty, \Lambda_0].
\end{align*}
Consequently, $(P_\lambda)$ has at least one nontrivial non-negative solution for every $\lambda < 0$, at least one positive solution for $\lambda = 0, \Lambda_0$, and at least two positive solutions for every $\lambda \in (0, \Lambda_0)$, see Figure \ref{fig16_0404}. 

\end{enumerate}
}\end{rem}

	\begin{figure}[!htb]
{\unitlength 0.1in
\begin{picture}( 27.2700, 22.0200)( 16.8000,-23.6000)
%
\special{pn 8}%
\special{pa 1702 2010}%
\special{pa 4408 2010}%
\special{fp}%
\special{sh 1}%
\special{pa 4408 2010}%
\special{pa 4340 1990}%
\special{pa 4354 2010}%
\special{pa 4340 2030}%
\special{pa 4408 2010}%
\special{fp}%
%
\special{pn 8}%
\special{pa 2916 2360}%
\special{pa 2916 402}%
\special{fp}%
\special{sh 1}%
\special{pa 2916 402}%
\special{pa 2896 468}%
\special{pa 2916 454}%
\special{pa 2936 468}%
\special{pa 2916 402}%
\special{fp}%
%
\special{pn 20}%
\special{pa 2916 2010}%
\special{pa 2924 1976}%
\special{pa 2942 1912}%
\special{pa 2952 1880}%
\special{pa 2964 1850}%
\special{pa 2992 1794}%
\special{pa 3028 1746}%
\special{pa 3050 1726}%
\special{pa 3074 1706}%
\special{pa 3100 1690}%
\special{pa 3128 1674}%
\special{pa 3188 1646}%
\special{pa 3252 1622}%
\special{pa 3284 1612}%
\special{pa 3316 1600}%
\special{pa 3412 1570}%
\special{pa 3442 1558}%
\special{pa 3474 1546}%
\special{pa 3504 1534}%
\special{pa 3560 1506}%
\special{pa 3588 1490}%
\special{pa 3614 1472}%
\special{pa 3640 1452}%
\special{pa 3664 1432}%
\special{pa 3686 1408}%
\special{pa 3708 1382}%
\special{pa 3728 1354}%
\special{pa 3746 1324}%
\special{pa 3762 1294}%
\special{pa 3774 1260}%
\special{pa 3784 1228}%
\special{pa 3788 1196}%
\special{pa 3790 1166}%
\special{pa 3784 1136}%
\special{pa 3774 1110}%
\special{pa 3760 1086}%
\special{pa 3738 1064}%
\special{pa 3714 1044}%
\special{pa 3688 1026}%
\special{pa 3656 1010}%
\special{pa 3624 996}%
\special{pa 3522 960}%
\special{pa 3488 950}%
\special{pa 3422 932}%
\special{pa 3390 922}%
\special{pa 3358 914}%
\special{pa 3326 908}%
\special{pa 3296 900}%
\special{pa 3236 888}%
\special{pa 3204 882}%
\special{pa 3114 866}%
\special{pa 3054 858}%
\special{pa 2992 850}%
\special{pa 2960 848}%
\special{pa 2928 844}%
\special{pa 2898 840}%
\special{pa 2834 836}%
\special{pa 2770 830}%
\special{pa 2706 826}%
\special{pa 2672 822}%
\special{pa 2512 812}%
\special{pa 2478 810}%
\special{pa 2446 806}%
\special{pa 2382 802}%
\special{pa 2350 798}%
\special{pa 2318 796}%
\special{pa 2190 784}%
\special{pa 2158 780}%
\special{pa 2094 774}%
\special{pa 2062 770}%
\special{pa 2030 768}%
\special{pa 1966 760}%
\special{pa 1936 756}%
\special{pa 1904 752}%
\special{pa 1872 750}%
\special{pa 1776 738}%
\special{pa 1746 734}%
\special{pa 1682 726}%
\special{pa 1680 726}%
\special{fp}%
\put(27.5200,-21.6300){\makebox(0,0){O}}%
\put(43.9300,-17.7800){\makebox(0,0){$\lambda$}}%
\put(29.0900,-2.3800){\makebox(0,0){$\max_{\overline{\Omega}}u$}}%
\put(33.4900,-7.2600){\makebox(0,0){$\mathcal{C}_0$}}%
%
\special{pn 8}%
\special{pa 3796 1180}%
\special{pa 3796 2010}%
\special{dt 0.045}%
\put(37.9600,-21.7200){\makebox(0,0){$\Lambda_0$}}%
%
\special{pn 4}%
\special{pa 2750 1390}%
\special{pa 2140 2000}%
\special{fp}%
\special{pa 2690 1390}%
\special{pa 2080 2000}%
\special{fp}%
\special{pa 2630 1390}%
\special{pa 2020 2000}%
\special{fp}%
\special{pa 2570 1390}%
\special{pa 1960 2000}%
\special{fp}%
\special{pa 2510 1390}%
\special{pa 1900 2000}%
\special{fp}%
\special{pa 2450 1390}%
\special{pa 1840 2000}%
\special{fp}%
\special{pa 2390 1390}%
\special{pa 1780 2000}%
\special{fp}%
\special{pa 2330 1390}%
\special{pa 1720 2000}%
\special{fp}%
\special{pa 2270 1390}%
\special{pa 1690 1970}%
\special{fp}%
\special{pa 2210 1390}%
\special{pa 1690 1910}%
\special{fp}%
\special{pa 2150 1390}%
\special{pa 1690 1850}%
\special{fp}%
\special{pa 2090 1390}%
\special{pa 1690 1790}%
\special{fp}%
\special{pa 2030 1390}%
\special{pa 1690 1730}%
\special{fp}%
\special{pa 1970 1390}%
\special{pa 1690 1670}%
\special{fp}%
\special{pa 1910 1390}%
\special{pa 1690 1610}%
\special{fp}%
\special{pa 1850 1390}%
\special{pa 1690 1550}%
\special{fp}%
\special{pa 1790 1390}%
\special{pa 1690 1490}%
\special{fp}%
\special{pa 1730 1390}%
\special{pa 1690 1430}%
\special{fp}%
\special{pa 2810 1390}%
\special{pa 2200 2000}%
\special{fp}%
\special{pa 2870 1390}%
\special{pa 2260 2000}%
\special{fp}%
\special{pa 2910 1410}%
\special{pa 2320 2000}%
\special{fp}%
\special{pa 2910 1470}%
\special{pa 2380 2000}%
\special{fp}%
\special{pa 2910 1530}%
\special{pa 2440 2000}%
\special{fp}%
\special{pa 2910 1590}%
\special{pa 2500 2000}%
\special{fp}%
\special{pa 2910 1650}%
\special{pa 2560 2000}%
\special{fp}%
\special{pa 2910 1710}%
\special{pa 2620 2000}%
\special{fp}%
\special{pa 2910 1770}%
\special{pa 2680 2000}%
\special{fp}%
\special{pa 2910 1830}%
\special{pa 2740 2000}%
\special{fp}%
\special{pa 2910 1890}%
\special{pa 2800 2000}%
\special{fp}%
\special{pa 2910 1950}%
\special{pa 2860 2000}%
\special{fp}%
\end{picture}}%
	  \caption{A unbounded subcontinuum of nontrivial non-negative solutions 
in the case $\int_\Omega a < 0$.}   
	\label{fig16_0404}
	    \end{figure}
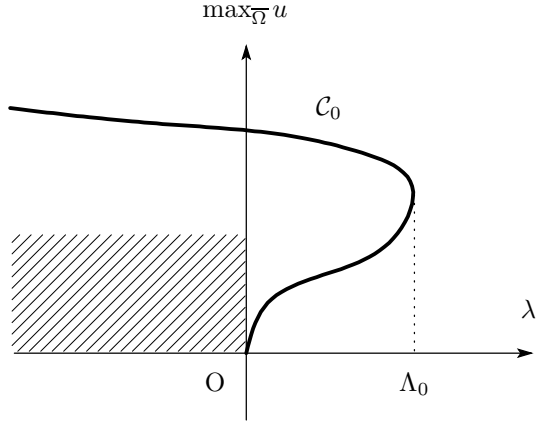 

\subsection{Notation}

Throughout this article we use the following notations and conventions: 
\begin{itemize}
\item The infimum of an empty set is assumed to be $\infty$.
\item Unless otherwise stated, for any $f \in L^1(\Om)$ the integral $\int_\Omega f$ is considered with respect to the Lebesgue measure, whereas for any $g \in L^1(\partial \Om)$ the integral $\int_{\partial \Om} g$  is considered with respect to the surface measure. 
\item For $r\geq 1$ the Lebesgue norm in $L^r (\Omega)$ will be denoted by $\| \cdot \|_r$ and the usual norm of $H^1(\Omega)$ by $\|\cdot \|$.
\item The strong and weak convergence are denoted by $\rightarrow$ and $\rightharpoonup$, respectively. 
\item The positive
and negative parts of a function $u$ are defined by $u^{\pm} :=\max
\{\pm u,0\}$. 
\item If $U \subset \R^N$ then we denote the closure of $U$ by $\overline{U}$ and the interior of $U$ by $\text{int }U$.
\item The support of a measurable function $f$ is denoted by supp $f$.
\end{itemize}

\bigskip

The rest of this article is organized as follows. In Section \ref{sec2} we prove some non-existence results. In Section \ref{sec3}, to bypass the difficulty that $(P_\lambda)$ is not differentiable at $u=0$, we consider a regularized problem with a new parameter $\epsilon > 0$ at $u=0$ and prove the existence of a unbounded subcontinuum of positive solutions for this problem. By the Whyburn topological technique we shall deduce the existence of a unbounded subcontinuum of nontrivial non-negative solutions for $(P_\lambda)$, passing to the limit as $\epsilon \to 0^+$. Section \ref{sec4} is devoted to the proofs of Theorems \ref{thm:ageq0} and \ref{maint}. 

\medskip
\section{Some non-existence results} \label{sec2} 
\medskip

First we prove the following non-existence result in the case $\int_\Omega a \geq 0$. 

\begin{prop} \label{prop:ageq0:no}
Assume $\int_\Omega a \geq 0$. Then the following two assertions hold:
\begin{enumerate}
\item There is no positive solution of $(P_\lambda)$ for any $\lambda \geq 0$. 

\item Assume $p\leq \frac{2N}{N-2}$ if $N>2$. Then, for any $\Lambda > 0$ there exists $\delta_0 > 0$ such that $\max_{\overline{\Omega}} u > \delta_0$ for all nontrivial non-negative solutions of $(P_\lambda)$ with $\lambda \leq -\Lambda$.

\end{enumerate}
\end{prop}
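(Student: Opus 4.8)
The plan is to prove the two assertions separately, both resting on an integral identity obtained by testing the equation $(P_\lambda)$ with a suitable function.

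For assertion (1): suppose $u$ is a positive solution of $(P_\lambda)$ with $\lambda \ge 0$. The natural first step is to integrate the equation over $\Omega$: since $\int_\Omega \Delta u = \int_{\partial\Omega} \frac{\partial u}{\partial \n} = 0$ by the Neumann condition, we obtain
\begin{align*}
0 = \lambda \int_\Omega b(x) u^{q-1} + \int_\Omega a(x) u^{p-1}.
\end{align*}
The first term on the right is $\ge 0$ because $\lambda \ge 0$, $b \ge 0$, $u > 0$; hence $\int_\Omega a(x) u^{p-1} \le 0$. To contradict this, I would test the equation instead with $u^{-s}$ for a well-chosen exponent $s$, or equivalently work with the substitution that linearizes the dependence. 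A cleaner route: divide the equation by $u$ (legitimate since $u > 0$ on $\overline\Omega$) and integrate; using $-\int_\Omega \frac{\Delta u}{u} = -\int_\Omega \frac{|\nabla u|^2}{u^2} \le 0$ (again the boundary term vanishes), we get
\begin{align*}
0 \ge -\int_\Omega \frac{|\nabla u|^2}{u^2} = \lambda \int_\Omega b(x) u^{q-2} + \int_\Omega a(x) u^{p-2}.
\end{align*}
Now apply the substitution idea more carefully: one wants a test function producing $\int_\Omega a$ with a nonnegative remainder. Testing with a power $u^{1-p}$ or with the constant function $1$ after raising to an appropriate power, combined with Jensen's or Hölder's inequality, should force $\int_\Omega a > 0$-type information to collide with $\int_\Omega a \ge 0$ and $\int_\Omega b \ge 0$; the delicate point is getting a strict inequality. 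The intended argument is likely: with $v = u$, test $(P_\lambda)$ against $u^{1-p}$ to obtain $(p-1)\int_\Omega |\nabla u|^2 u^{-p} = \lambda \int_\Omega b u^{q-p} + \int_\Omega a$, so $\int_\Omega a = (p-1)\int_\Omega |\nabla u|^2 u^{-p} - \lambda \int_\Omega b u^{q-p} $; since $q < p$ and $u$ could be small this term has an unfavorable sign, so instead one should test with a power that makes the $b$-term nonnegative and the gradient term nonnegative simultaneously — testing against $u^{1-q}$ gives $(q-1)\int_\Omega |\nabla u|^2 u^{-q} = \lambda \int_\Omega b + \int_\Omega a u^{p-q}$, whence $\int_\Omega a u^{p-q} = (q-1)\int_\Omega |\nabla u|^2 u^{-q} - \lambda\int_\Omega b \le (q-1)\int_\Omega|\nabla u|^2 u^{-q}$ — this still does not immediately close. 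I expect the actual proof combines the two identities (the one from testing with $1$ and the one from testing with $u^{1-q}$ or $u^{1-p}$) to eliminate the gradient term and arrive at a contradiction; this combination is the crux of part (1).

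For assertion (2): this is an a priori lower bound away from zero, proved by contradiction and compactness. Suppose not: there is $\Lambda > 0$ and a sequence $(\lambda_n, u_n)$ of nontrivial nonnegative solutions with $\lambda_n \le -\Lambda$ and $\max_{\overline\Omega} u_n \to 0$. Write $t_n := \|u_n\|_\infty \to 0$ and rescale $v_n := u_n / t_n$, so $\|v_n\|_\infty = 1$ and $v_n$ solves
\begin{align*}
-\Delta v_n = \lambda_n b(x) t_n^{q-2} v_n^{q-1} + a(x) t_n^{p-2} v_n^{p-1} \quad\text{in }\Omega, \qquad \frac{\partial v_n}{\partial\n}=0.
\end{align*}
Since $q < 2 < p$, we have $t_n^{q-2} \to \infty$ and $t_n^{p-2} \to 0$. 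The hypothesis $p \le \frac{2N}{N-2}$ gives the right Sobolev control on the $a$-term; the problematic term is the singular $\lambda_n b t_n^{q-2} v_n^{q-1}$. The key observation is that integrating the equation for $v_n$ over $\Omega$ yields
\begin{align*}
0 = \lambda_n t_n^{q-2} \int_\Omega b\, v_n^{q-1} + t_n^{p-2}\int_\Omega a\, v_n^{p-1},
\end{align*}
so $\int_\Omega b\, v_n^{q-1} = -\frac{t_n^{p-q}}{\lambda_n} \int_\Omega a\, v_n^{p-1}$, and since $t_n^{p-q} \to 0$, $|\lambda_n| \ge \Lambda$, and $\int_\Omega a\, v_n^{p-1}$ stays bounded, we get $\int_\Omega b\, v_n^{q-1} \to 0$. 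Next, testing the $v_n$-equation with $v_n$ gives $\int_\Omega |\nabla v_n|^2 = \lambda_n t_n^{q-2}\int_\Omega b v_n^{q} + t_n^{p-2}\int_\Omega a v_n^{p}$; one must show the right side stays bounded. Here I would use the elementary inequality $v_n^q \le v_n^{q-1}$ (valid since $0 \le v_n \le 1$), so $\lambda_n t_n^{q-2} \int_\Omega b v_n^q$, which is $\le 0$, has absolute value $\le |\lambda_n| t_n^{q-2} \int_\Omega b v_n^{q-1}$ — but $t_n^{q-2} \to \infty$, so boundedness needs $\int_\Omega b v_n^{q-1}$ to decay fast enough, namely faster than $t_n^{2-q}$; the integral identity only gave rate $t_n^{p-q}/|\lambda_n|$, and since $p - q$ versus $2 - q$: we need $p \ge 2$, which holds. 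Indeed $t_n^{p-q}/|\lambda_n| \cdot t_n^{q-2} = t_n^{p-2}/|\lambda_n| \to 0$, so $\lambda_n t_n^{q-2}\int_\Omega b v_n^q$ is bounded (in fact $\to 0$). Thus $\{v_n\}$ is bounded in $H^1(\Omega)$; passing to a subsequence, $v_n \rightharpoonup v$ in $H^1$, $v_n \to v$ in $L^r$ for $r$ subcritical and a.e., with $v \ge 0$, and from $-\Delta v_n = \lambda_n b t_n^{q-2} v_n^{q-1} + o(1)$ (the $a$-term vanishes) we must identify the limit. Writing $w_n := \lambda_n b t_n^{q-2} v_n^{q-1} \le 0$ and noting $\int_\Omega |w_n| = |\lambda_n| t_n^{q-2}\int_\Omega b v_n^{q-1}$: the rate computation gives $|\lambda_n| t_n^{q-2}\int_\Omega b v_n^{q-1} = t_n^{p-2}\int_\Omega a v_n^{p-1}$ which $\to 0$, so $w_n \to 0$ in $L^1$, hence $-\Delta v = 0$ in $\Omega$ with Neumann condition, forcing $v \equiv $ const. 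Then elliptic regularity (bootstrapping, using $w_n \to 0$ in better norms via the $L^1$ bound and boundedness of $v_n$) upgrades the convergence $v_n \to v$ in $C(\overline\Omega)$, so $v$ is a constant with $\|v\|_\infty = 1$, i.e. $v \equiv 1$. But then $\int_\Omega b v_n^{q-1} \to \int_\Omega b > 0$, contradicting $\int_\Omega b v_n^{q-1} \to 0$ established above. This contradiction proves assertion (2).

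The main obstacle is the regularity/compactness upgrade in part (2): one has only an $L^1$ (not $L^p$, $p>N/2$) a priori bound on the right-hand side $w_n$ of the rescaled equation, so concluding uniform convergence $v_n \to 1$ needs care — one should exploit that $w_n$ has a fixed sign and $\int_\Omega |w_n| \to 0$ together with the uniform bound $\|v_n\|_\infty = 1$, perhaps by writing $v_n = \bar v_n + z_n$ with $\bar v_n$ the mean and $z_n$ the Neumann solution of $-\Delta z_n = w_n - \overline{w_n}$, then estimating $z_n$ in $W^{1,s}$ for $s < N/(N-1)$ and iterating, or simply invoking that $\|z_n\|_{\infty}$ can be controlled by $\|w_n\|_{L^r}$ for $r$ slightly above $1$ after one De Giorgi–Nash–Moser step once we know $v_n$ is bounded. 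A secondary subtlety is ensuring in part (1) that the combination of integral identities genuinely yields a contradiction rather than merely $0 = 0$; the sign condition $b \ge 0$, $b \not\equiv 0$ and $\int_\Omega a \ge 0$ must both be used, and the case $\lambda = 0$ (where the $b$-term drops out entirely) should be checked as it may be the source of the condition $a \not\equiv 0$ being essential.
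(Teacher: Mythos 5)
There are genuine gaps in both parts, and in both cases you came within one step of the paper's argument before veering off.

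In part (1) you name the right test function, $u^{1-p}$, which is exactly what the paper uses, but you compute its contribution with the wrong sign: $\int_\Omega \nabla u\cdot\nabla(u^{1-p}) = (1-p)\int_\Omega |\nabla u|^2 u^{-p} \le 0$, not $+(p-1)\int_\Omega|\nabla u|^2u^{-p}$. With the correct sign the identity reads $0 \ge -(p-1)\int_\Omega|\nabla u|^2u^{-p} = \int_\Omega a + \lambda\int_\Omega b\,u^{q-p}$, and since $\int_\Omega a\ge 0$ and $\int_\Omega b\,u^{q-p}>0$ (as $b\ge0$, $b\not\equiv0$, $u>0$ on $\overline\Omega$), this forces $\lambda<0$ immediately --- no combination of identities is needed. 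Because of the sign error you declare the term ``unfavorable'' and end part (1) with ``I expect the actual proof combines the two identities,'' which is not a proof. You also need to treat the degenerate case where the gradient term vanishes, i.e.\ $u$ constant: there the equation itself gives $a=-\lambda b\,u^{q-p}$, and integrating yields $\lambda<0$ using $\int_\Omega a\ge0$ and $a\not\equiv0$ (the paper isolates this as the case $a\equiv cb$).

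In part (2) you normalize by $t_n=\|u_n\|_\infty$ rather than by the $H^1$ norm, and this choice manufactures the obstacle you then fail to overcome. Your argument correctly gives $\int_\Omega b\,v_n^{q-1}\to0$ and $\int_\Omega|\nabla v_n|^2\to0$, so $v_n$ converges in $H^1$ to a constant $v\ge0$; but nothing you prove rules out $v=0$ (a spike with $\max v_n=1$ but $v_n\to0$ in $L^2$ is compatible with $\|\nabla v_n\|_2\to0$ in dimension $N\ge2$), in which case $\int_\Omega b\,v_n^{q-1}\to0$ is no contradiction at all. Identifying $v\equiv1$ requires uniform convergence, and your proposed bootstrap from an $L^1$ bound on the right-hand side does not reach $L^\infty$ (that would need $L^r$ control with $r>N/2$, which you do not have since $\lambda_n t_n^{q-2}$ is unbounded). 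The paper sidesteps all of this by setting $v_n=u_n/\|u_n\|_{H^1}$: then strong $H^1$ convergence of $v_n$ to a constant together with $\|v_n\|_{H^1}=1$ forces the limit to be a \emph{positive} constant, and $bv_0^{q-1}\equiv0$ gives $b\equiv0$, a contradiction. (Your route could be repaired by a local maximum principle: since $-\Delta v_n\le t_n^{p-2}\|a\|_\infty\to0$ and $0\le v_n\le1$, one gets $1=\max_{\overline\Omega}v_n\le C(\|v_n\|_{2}+o(1))$, ruling out $v=0$; but as written the gap stands.)
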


\begin{proof} \strut
\begin{enumerate}
\item Let $u$ be a positive solution of $(P_\lambda)$ for some $\lambda \in \R$. 
We consider two cases: 
\begin{enumerate}
\item[(i)] We assume that $a(x)\not\equiv cb(x)$ for any $c \in \R$. Then $u$ is not a constant. 
The divergence theorem provides
\begin{align*}
\int_\Omega \frac{-\Delta u}{u^{p-1}} = \int_\Omega \nabla u \nabla \left( \frac{1}{u^{p-1}}\right) 
= -\int_\Omega (p-1) |\nabla u|^2 u^{-p}< 0. 
\end{align*}
It follows that 
\begin{align*}
\int_\Omega \frac{-\Delta u}{u^{p-1}} = \int_\Omega a + \lambda \int_\Omega b u^{q-p} < 0. 
\end{align*}
Since $\int_\Omega b u^{q-p} > 0$, it should hold that $\lambda < 0$.\\ 

\item[(ii)] We assume now that $a(x) \equiv c b(x)$ for some $c \in \R$. Since $\int_\Omega a \geq 0$ and $b\geq 0$, we have $c>0$. If $u$ is a constant then it is clear that $\lambda < 0$. Otherwise we argue as in (i).\\
\end{enumerate}

\item Let $\Lambda > 0$. Assume by contradiction that there exists a sequence $(u_n)$ of nontrivial non-negative solutions  of $(P_\lambda)$ with $\lambda = \lambda_n$ such that $\lambda_n \leq -\Lambda$ and $\max_{\overline{\Omega}} u_n \to 0$ ($\lambda_n \to -\infty$ may occur). It follows that 
\begin{align} \label{ineq:410}
\int_\Omega |\nabla u_n|^2 = \int_\Omega a u_n^p + \lambda_n \int_{\Omega}b u_n^q \leq \int_\Omega a u_n^p \to 0, 
\end{align}
and consequently $ u_n  \to 0$ in $H^1(\Omega)$. We set $v_n=\frac{u_n}{\| u_n \|}$, and we assume that $v_n \rightharpoonup v_0$ for some $v_0 \in H^1(\Omega)$.  From 
\begin{align*}
\int_\Omega \nabla u_n \nabla \phi = \int_\Omega au_n^{p-1} \phi +\lambda_n \int_\Omega bu_n^{q-1} \phi, \quad \forall \phi \in H^1(\Omega), 
\end{align*}
we get $\lambda_n \int_\Omega bv_n^{q-1} \phi \to 0$ for every $\phi \in H^1(\Omega)$. It follows that $\int_\Omega bv_0^{q-1} \phi=0$ for every $\phi \in H^1(\Omega)$, so that $bv_0^{q-1} \equiv 0$. 

On the other hand, from \eqref{ineq:410} we get $\lim \int_\Omega |\nabla v_n|^2=0$, which implies $v_n \rightarrow v_0$ in $H^1(\Omega)$, and $v_0$ is a constant. Since $\| v_n \| = 1$, we have $v_0 > 0$. Hence, from $bv_0^{q-1} \equiv 0$ we obtain $b \equiv 0$, which is a contradiction.
\end{enumerate}
\end{proof}

\begin{prop}  \label{prop:a<0:no}
Assume $\int_\Omega a < 0$, and $p<\frac{2N}{N-2}$ if $N>2$. Then there exists $c_0 > 0$ such that $\max_{\overline{\Omega}} u \geq c_0$ for all nontrivial non-negative solutions $u$ of $(P_\lambda)$ with $\lambda \leq 0$. 
\end{prop}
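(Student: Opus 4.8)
The plan is to argue by contradiction, mirroring the structure of assertion (2) in Proposition \ref{prop:ageq0:no}, but now extracting a nontrivial limit profile that solves a problem with no nontrivial nonnegative solution. Suppose there is a sequence $(\lambda_n, u_n)$ of nontrivial nonnegative solutions of $(P_{\lambda_n})$ with $\lambda_n \leq 0$ and $\max_{\overline{\Omega}} u_n \to 0$. Testing the equation with $u_n$ and using $\lambda_n \leq 0$, $b \geq 0$ gives $\int_\Omega |\nabla u_n|^2 \leq \int_\Omega a u_n^p \to 0$, so $u_n \to 0$ in $H^1(\Omega)$. As before, set $v_n = u_n / \|u_n\|$, so that $\|v_n\| = 1$; after passing to a subsequence, $v_n \rightharpoonup v_0$ in $H^1(\Omega)$ and (using $p < 2N/(N-2)$, i.e.\ subcritical Sobolev embedding) $v_n \to v_0$ in $L^p(\Omega)$ and in $L^2(\Omega)$.

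Next I would rescale the equation for $v_n$. Dividing $-\Delta u_n = \lambda_n b u_n^{q-1} + a u_n^{p-1}$ by $\|u_n\|$ gives, weakly,
\begin{align*}
\int_\Omega \nabla v_n \nabla \phi = \lambda_n \|u_n\|^{q-2} \int_\Omega b v_n^{q-1} \phi + \|u_n\|^{p-2} \int_\Omega a v_n^{p-1} \phi, \quad \forall \phi \in H^1(\Omega).
\end{align*}
Here $\|u_n\|^{p-2} \to 0$ since $p > 2$, and the second integral stays bounded because $v_n \to v_0$ in $L^p$; hence the $a$-term vanishes in the limit. The $b$-term is more delicate: the coefficient $\lambda_n \|u_n\|^{q-2} \to -\infty \cdot 0$ is indeterminate since $q < 2$. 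However, one can still conclude: the quantity $\int_\Omega |\nabla v_n|^2$ equals the right-hand side with $\phi = v_n$, which tends to $0$ by \eqref{ineq:410}-type estimate (more precisely $\int_\Omega |\nabla u_n|^2 / \|u_n\|^2 \leq \|u_n\|^{p-2}\int_\Omega a v_n^p \to 0$), so $\int_\Omega |\nabla v_n|^2 \to 0$, forcing $v_n \to v_0$ strongly in $H^1$ with $v_0$ a nonzero constant, $v_0 > 0$. Then passing to the limit in the weak formulation with arbitrary $\phi$ shows that the (bounded, along a further subsequence) sequence $c_n := \lambda_n \|u_n\|^{q-2}$ satisfies $c_n \int_\Omega b v_n^{q-1}\phi \to -\int_\Omega \nabla v_0 \nabla \phi = 0$. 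Since $\int_\Omega b v_n^{q-1}\phi \to v_0^{q-1}\int_\Omega b \phi$ and $b \not\equiv 0$, choosing $\phi$ with $\int_\Omega b\phi \neq 0$ forces $c_n \to 0$.

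With $c_n = \lambda_n \|u_n\|^{q-2} \to 0$ in hand, I would return to the unrescaled equation and instead rescale by $\|u_n\|^{q-1}$ appropriately, or test with $1$: integrating $(P_{\lambda_n})$ over $\Omega$ gives $0 = \lambda_n \int_\Omega b u_n^{q-1} + \int_\Omega a u_n^{p-1}$, i.e.\ $\lambda_n \int_\Omega b u_n^{q-1} = -\int_\Omega a u_n^{p-1}$. Dividing by $\|u_n\|^{q-1}$ and writing $v_n = u_n/\|u_n\|$ yields $c_n \int_\Omega b v_n^{q-1} = -\|u_n\|^{p-q}\int_\Omega a v_n^{p-1}$; both sides go to $0$, which is consistent but not yet contradictory, so the integrated identity alone is not enough. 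The real contradiction should come from dividing by $\|u_n\|^{q-1}$ in the full weak formulation: since $c_n \to 0$ and $\|u_n\|^{p-q} \to 0$ (as $p > q$), one gets $\|u_n\|^{2-q}\int_\Omega \nabla v_n \nabla \phi \to 0$ automatically, so that route also degenerates. The decisive step is therefore to rescale $u_n$ by the \emph{right} power: set $w_n = u_n / \max_{\overline\Omega} u_n$ (so $\max w_n = 1$) and use elliptic $L^p$/Schauder estimates — from $-\Delta u_n = \lambda_n b u_n^{q-1} + a u_n^{p-1}$ and $\|u_n\|_\infty \to 0$ one checks $\|{-}\Delta u_n\|_\infty \leq |\lambda_n|\|b\|_\infty \|u_n\|_\infty^{q-1} + \|a\|_\infty\|u_n\|_\infty^{p-1}$; dividing by the appropriate normalization and using that $q-1 < 1$, a careful bootstrap shows $w_n$ is bounded in $C^{1,\theta}(\overline\Omega)$, hence converges in $C^1$ to some $w_0 \geq 0$, $\max w_0 = 1$, which (after identifying the limiting equation) must be a nonzero constant while simultaneously $\int_\Omega a w_0^{p-1} + (\lim c_n)\int_\Omega b w_0^{q-1} = 0$ forces $\int_\Omega a \geq 0$, contradicting $\int_\Omega a < 0$.

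The main obstacle is precisely pinning down the correct normalization and the limiting equation for the blow-down sequence: because $1 < q < 2 < p$, the two nonlinear terms scale with different powers of $\|u_n\|_\infty$, and the parameter $\lambda_n$ may run to $-\infty$, so one must carefully determine whether, along a subsequence, $\lambda_n\|u_n\|_\infty^{q-2}$ converges (to a finite limit $\ell \le 0$) or diverges, and treat each alternative. If it diverges the rescaled limit satisfies $b w_0^{q-1} \equiv 0$ hence (since $b \not\equiv 0$) $w_0 \equiv 0$ on $\Omega^b_+$, then by the strong maximum principle / unique continuation type argument together with $w_0$ being a nonnegative solution of $-\Delta w_0 = 0$ away from there... the cleanest implementation is the Rayleigh-quotient argument of part (2) of Proposition \ref{prop:ageq0:no} run verbatim up to the point $bv_0^{q-1}\equiv 0$ with $v_0 > 0$ constant, giving $b \equiv 0$ — a contradiction — so in fact the divergent case is handled identically and only the convergent case $\lambda_n\|u_n\|^{q-2}\to \ell \le 0$ genuinely uses $\int_\Omega a < 0$ via the integrated identity $\ell \int_\Omega b\, v_0^{q-1} = -\lim \|u_n\|^{p-q}\int_\Omega a v_0^{p-1}=0$ combined with the sign analysis. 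I expect the write-up to consist of: (i) the $H^1$-smallness and strong convergence of $v_n$ to a positive constant exactly as in Proposition \ref{prop:ageq0:no}(2); (ii) deducing $bv_0^{q-1}\equiv 0$, hence $b\equiv 0$, the desired contradiction.
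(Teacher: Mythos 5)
Your setup coincides with the paper's: assume a sequence of nontrivial non-negative solutions with $\lambda_n\le 0$ and $\max_{\overline{\Omega}}u_n\to 0$, derive $\int_\Omega|\nabla u_n|^2\le\int_\Omega a u_n^p\to 0$, normalize $v_n=u_n/\|u_n\|$, and obtain $v_n\to v_0$ in $H^1(\Omega)$ with $v_0$ a positive constant. But your concluding step has a genuine gap. The route you finally settle on --- run the argument of Proposition \ref{prop:ageq0:no}(2) ``verbatim'' up to $bv_0^{q-1}\equiv 0$, hence $b\equiv 0$ --- does not work here: that argument divides by $\lambda_n$ and therefore needs $|\lambda_n|\ge\Lambda>0$, whereas under the present hypothesis $\lambda_n\le 0$ the sequence may satisfy $\lambda_n\to 0$ or even $\lambda_n\equiv 0$, in which case the $b$-term contributes nothing and no conclusion about $b$ is possible (for $\lambda=0$ the function $b$ does not even enter the equation). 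Your treatment of the ``convergent case'' $\lambda_n\|u_n\|^{q-2}\to\ell$ likewise ends, as you yourself observe, with an identity in which both sides tend to $0$, i.e.\ no contradiction; the phrase ``combined with the sign analysis'' is precisely the missing content, and the proposed blow-down via $w_n=u_n/\max_{\overline{\Omega}}u_n$ is never brought to a conclusion.

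The missing idea is much simpler, and you already wrote down the inequality containing it: from $0\le\int_\Omega|\nabla u_n|^2=\int_\Omega a u_n^p+\lambda_n\int_\Omega b u_n^q$ together with $\lambda_n\le 0$ and $b\ge 0$ one gets $\int_\Omega a u_n^p\ge 0$, hence $\int_\Omega a v_n^p\ge 0$ after dividing by $\|u_n\|^p$. Since $v_n\to v_0$ in $L^p(\Omega)$ (this is where the subcriticality $p<\frac{2N}{N-2}$ enters) and $v_0$ is a positive constant, passing to the limit yields $0\le\int_\Omega a v_0^p=v_0^p\int_\Omega a$, contradicting $\int_\Omega a<0$. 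This is exactly how the paper concludes; no rescaling of the equation, no case distinction on $\lambda_n\|u_n\|^{q-2}$, and no elliptic bootstrap are needed.
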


\begin{proof}
Similarly as in the proof of Proposition \ref{prop:ageq0:no}(2), we argue by contradiction. Assume that there exists a sequence $\{ (\lambda_n, u_n) \}$ of nontrivial non-negative solutions $u_n$ of $(P_\lambda)$ with $\lambda = \lambda_n$ such that $\lambda_n \leq 0$ and $\max_{\overline{\Omega}} u_n \to 0$ ($\lambda_n \to -\infty$ may occur). It follows that $\| u_n \| \to 0$ using \eqref{ineq:410} again. Set $v_n = \frac{u_n}{\| u_n \|}$. We may assume that $v_n\rightharpoonup v_0$ for some $v_0 \in H^1(\Omega)$, and $v_n \to v_0$ in $L^p(\Omega)$. From \eqref{ineq:410} it follows that $\lim \int_\Omega |\nabla v_n|^2 = 0$. We deduce that $v_0$ is a positive constant, and $v_n \to v_0$ in $H^1(\Omega)$. On the other hand, from \eqref{ineq:410} we infer $\int_\Omega a u_n^p \geq 0$, so that $\int_\Omega a v_n^p \geq 0$. Since $v_n \to v_0$ in $L^p(\Omega)$, we have $0\leq \int_\Omega av_0^p = v_0^p \int_\Omega a$, which contradicts our assumption.
\end{proof}

\medskip
\section{Positive solutions of a regularized problem} \label{sec3}
\medskip

We consider now the existence of a subcontinuum of nontrivial non-negative solutions for $(P_\lambda)$ emanating from the trivial line. Since the mapping $t\mapsto t^{q-1}$ is not differentiable at $t=0$, we can not use the local and global bifurcation theory from simple eigenvalues \cite{CR71,CR73}. To overcome this difficulty we investigate the existence of a subcontinuum of positive solutions emanating from the trivial line for a regularized version of $(P_\lambda)$, which is formulated as 
$$
\begin{cases}
-\Delta u =  \lambda b(x) (u+\epsilon)^{q-2}u +a(x) u^{p-1} & \mbox{in $\Omega$}, \\
\frac{\partial u}{\partial \mathbf{n}} = 0  & \mbox{on $\partial \Omega$},
\end{cases} \eqno{(Q_{\lambda, \epsilon})} 
$$
where $0<\epsilon \leq 1$. Indeed, the mapping $t\mapsto (t+\epsilon)^{q-2}t$ is analytic at $t=0$. 
We remark that   $(Q_{\lambda, 0})$ corresponds to $(P_\lambda)$, so that $(P_\lambda)$ is the limiting case of $(Q_{\lambda, \epsilon})$ as $\epsilon \to 0^+$. 
To study the existence of bifurcation points on the trivial line $\{ (\lambda, 0) \}$ for $(Q_{\lambda, \epsilon})$, we consider the linearized eigenvalue problem at a nonnegative solution $u$ of $(Q_{\lambda, \epsilon})$
\begin{align} \label{ep:ep}
\begin{cases}
-\Delta \phi = a(x)(p-1)u^{p-2}\phi + \lambda b(x) \left\{ 
(q-2)(u+\epsilon)^{q-3}u + (u+\epsilon)^{q-2} \right\} \phi + \sigma \phi & \mbox{in $\Omega$}, \\
\frac{\partial \phi}{\partial \mathbf{n}} = 0 & \mbox{on $\partial \Omega$}. 
\end{cases}
\end{align}
Plugging $u=0$ into \eqref{ep:ep}, we obtain the linearized eigenvalue problem
\begin{align} \label{ep:ep0}
\begin{cases}
-\Delta \phi = \lambda \epsilon^{q-2} b(x) \phi + \sigma \phi & \mbox{in $\Omega$}, \\
\frac{\partial \phi}{\partial \mathbf{n}} =  0 & \mbox{on $\partial \Omega$}. 
\end{cases}
\end{align}
This problem has a unique principal eigenvalue $\sigma_\epsilon(\lambda)$, which is simple. Moreover we see that $\sigma_\epsilon(\lambda) > 0$ for $\lambda < 0$, $\sigma_\epsilon(\lambda) = 0$ for $\lambda = 0$, and $\sigma_\epsilon(\lambda) < 0$ for $\lambda > 0$. Note that \eqref{ep:ep0} has a positive eigenfunction associated with $\sigma_\epsilon(\lambda)$, which is a positive constant if $\lambda = 0$.

\begin{prop} \label{prop:regp:continua} 
Let $0<\epsilon \leq 1$. Then the following two assertions hold:
\begin{enumerate}

\item If $u_n$ is a positive solution of $(Q_{\lambda, \epsilon})$ for $\lambda = \lambda_n$ such that  $\max_{\overline{\Omega}} u_n \to 0$ and $\lambda_n \to \lambda^*$ for some $\lambda^* \in \R$ then $\lambda^* = 0$.

  \item $(Q_{\lambda, \epsilon})$ possesses a unbounded subcontinuum $\mathcal{C}_\epsilon = \{ (\lambda, u) \}$ in $\R \times C(\overline{\Omega})$ of positive solutions, which bifurcates at $(0,0)$ and does not meet $(\lambda, 0)$ for any $\lambda \not= 0$. 

\end{enumerate}
\end{prop}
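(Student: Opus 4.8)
The plan is to handle the two assertions separately: (1) is a compactness argument, and (2) uses global bifurcation theory, which is available precisely because the regularization has made the nonlinearity smooth at $u=0$.

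\textbf{Assertion (1).} This is in the spirit of Propositions \ref{prop:ageq0:no} and \ref{prop:a<0:no}. Let $u_n>0$ solve $(Q_{\lambda_n,\epsilon})$ with $\max_{\overline{\Omega}}u_n\to 0$ and $\lambda_n\to\lambda^*$. Testing against $u_n$ and using that $(\lambda_n)$ is bounded, $0<(u_n+\epsilon)^{q-2}\leq\epsilon^{q-2}$, and $\|u_n\|_\infty\to 0$, one gets $\int_\Omega|\nabla u_n|^2\leq C\int_\Omega u_n^2$ for large $n$, hence $\|u_n\|\to 0$. Set $v_n=u_n/\|u_n\|$; up to a subsequence $v_n\rightharpoonup v_0$ in $H^1(\Omega)$ and $v_n\to v_0$ in $L^2(\Omega)$. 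Dividing $(Q_{\lambda_n,\epsilon})$ by $\|u_n\|$ and using $(u_n+\epsilon)^{q-2}\to\epsilon^{q-2}$ uniformly and $\|u_n\|_\infty^{p-2}\to 0$, we find that $v_0$ solves weakly
\begin{equation*}
-\Delta v_0=\lambda^*\epsilon^{q-2}b(x)v_0\ \ \text{in }\Omega,\qquad \frac{\partial v_0}{\partial\n}=0\ \ \text{on }\partial\Omega;
\end{equation*}
testing first with $v_n$ and then with $v_0$ yields $\|\nabla v_n\|_2\to\|\nabla v_0\|_2$, so together with the weak $H^1$- and strong $L^2$-convergence, $v_n\to v_0$ in $H^1(\Omega)$, whence $\|v_0\|=1$ and $v_0\geq 0$, $v_0\not\equiv 0$. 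By elliptic regularity, the strong maximum principle and the boundary point lemma, $v_0>0$ on $\overline{\Omega}$; integrating the equation over $\Omega$ gives $0=\lambda^*\epsilon^{q-2}\int_\Omega b\,v_0$, and since $\epsilon^{q-2}>0$ and $\int_\Omega b\,v_0>0$ we conclude $\lambda^*=0$.

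\textbf{Assertion (2).} First note that, since $\epsilon>0$, any nontrivial nonnegative solution of $(Q_{\lambda,\epsilon})$ is positive on $\overline{\Omega}$: it satisfies $-\Delta u=c(x)u$ with $c=\lambda b(u+\epsilon)^{q-2}+a\,u^{p-2}\in L^\infty(\Omega)$, so the strong maximum principle and the boundary point lemma apply. Next, rewrite $(Q_{\lambda,\epsilon})$ (with the nonlinearity truncated for $u<0$, so that testing against $u^-$ shows every solution of the truncated problem is $\geq 0$, and its nontrivial solutions are exactly the positive solutions of $(Q_{\lambda,\epsilon})$) as a fixed-point equation
\begin{equation*}
u=\mathcal{K}\big[(M+\lambda\epsilon^{q-2}b)u+\mathcal{N}(\lambda,u)\big]=:\mathcal{F}(\lambda,u)\quad\text{in }C(\overline{\Omega}),
\end{equation*}
where $M>0$, $\mathcal{K}=(-\Delta+M)^{-1}$ (Neumann) is compact, $\mathcal{F}(\lambda,0)=0$, and $\mathcal{N}(\lambda,u)=\lambda b\big((u+\epsilon)^{q-2}-\epsilon^{q-2}\big)u+a\,u^{p-1}=o(\|u\|_\infty)$ near $u=0$, locally uniformly in $\lambda$; here the analyticity of $t\mapsto(t+\epsilon)^{q-2}t$ at $t=0$ gives the required smoothness of $\mathcal{F}$. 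The linearization $v=\mathcal{K}[(M+\lambda\epsilon^{q-2}b)v]$ is equivalent to \eqref{ep:ep0} with $\sigma=0$, which has a nontrivial solution iff $\sigma_\epsilon(\lambda)=0$, i.e.\ iff $\lambda=0$, and for $\lambda=0$ this solution space is spanned by the constant $1$. Thus $(0,0)$ is the only candidate bifurcation point. To confirm it is one I would perform a Lyapunov--Schmidt reduction along $C(\overline{\Omega})=\R\cdot 1\oplus X$, $X=\{g:\int_\Omega g=0\}$, noting that $I-M\mathcal{K}$ is Fredholm of index zero with kernel $\R\cdot 1$ and (using $\int_\Omega\mathcal{K}g=M^{-1}\int_\Omega g$) range $X$; the bifurcation equation reads, at leading order, $\lambda\epsilon^{q-2}\big(\int_\Omega b\big)\,t+o(\,\cdot\,)=0$, and since $\int_\Omega b>0$ this is the Crandall--Rabinowitz transversality condition \cite{CR71}. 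Hence a local branch of positive solutions emanates from $(0,0)$ tangent to $\R\cdot 1$, and Rabinowitz's global bifurcation theorem yields a subcontinuum $\mathcal{C}_\epsilon\subset\R\times C(\overline{\Omega})$ through $(0,0)$ which is either unbounded or meets the trivial line at some $(\lambda_1,0)$, $\lambda_1\neq 0$. The latter is excluded by assertion (1), so $\mathcal{C}_\epsilon$ is unbounded and does not meet $\{(\lambda,0):\lambda\neq 0\}$; since every nontrivial solution of the truncated problem is positive, $\mathcal{C}_\epsilon\setminus\{(0,0)\}$ consists of positive solutions of $(Q_{\lambda,\epsilon})$.

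\textbf{Main obstacle.} The delicate step is the bifurcation analysis at $\lambda=0$ under the Neumann condition: the linearized operator at $(0,0)$ for $\lambda=0$ is the pure Neumann Laplacian, whose kernel is the constants, so one cannot invoke bifurcation from a nonzero principal eigenvalue and must instead work out the Fredholm structure of $I-M\mathcal{K}$ and check transversality directly, which is exactly where $b\geq 0$, $b\not\equiv 0$ (hence $\int_\Omega b>0$) is used. A secondary technical point is propagating positivity along all of $\mathcal{C}_\epsilon$, which is handled by the truncation together with the strong maximum principle.
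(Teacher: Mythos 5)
Your proposal is correct and follows essentially the same route as the paper, which compresses the whole argument into two sentences: assertion (1) is reduced to the fact that the principal eigenvalue $\sigma_\epsilon(\lambda)$ of \eqref{ep:ep0} vanishes only at $\lambda=0$ (your rescaling $v_n=u_n/\|u_n\|$ is precisely the standard proof of that reduction), and assertion (2) is cited as a direct consequence of Rabinowitz's global bifurcation theory \cite{Ra71} together with (1), which is exactly the Lyapunov--Schmidt/transversality/global-alternative chain you carry out, with the transversality supplied by $\int_\Omega b>0$. The one imprecision is the parenthetical truncation device: under a Neumann condition, testing the truncated problem against $u^-$ only yields that $u^-$ is constant, so negative constants survive as spurious solutions and the claim ``every solution of the truncated problem is $\geq 0$'' fails as stated; positivity along $\mathcal{C}_\epsilon$ should instead be propagated by the unilateral bifurcation argument (the local branch is tangent to the positive constant eigenfunction, and a nontrivial nonnegative limit of positive solutions satisfies $-\Delta u=c(x)u$ with $c\in L^\infty$ since $\epsilon>0$, so the strong maximum principle and connectedness keep the continuum in the positive cone, with assertion (1) excluding a return to the trivial line).
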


\begin{proof}
Assertion (1) is straightforward from the fact that $\sigma_\epsilon(\lambda) > 0$ for $\lambda < 0$, and $\sigma_\epsilon(\lambda) < 0$ for $\lambda > 0$. By using assertion (1), assertion (2) is a direct consequence of the global bifurcation theory \cite{Ra71}. 
\end{proof}

\medskip
\section{Proofs of Theorems \ref{thm:ageq0} and \ref{maint}}  \label{sec4}
\medskip

\subsection{A priori upper bounds} 

The following \textit{a priori} upper bound of $\lambda$ for positive solutions of $(Q_{\lambda, \epsilon})$ follows from \cite[Proposition 6.1]{RQU4}:

\begin{prop} \label{bound:lam}
If $(H_{01})$ holds then there exists $\overline{\lambda} > 0$ such that $(Q_{\lambda, \epsilon})$ has no positive solutions for $\lambda \geq \overline{\lambda}$ and $\epsilon \in [0,1]$. 
\end{prop}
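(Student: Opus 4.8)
The plan is to reduce Proposition \ref{bound:lam} to the already-cited a priori bound from \cite[Proposition 6.1]{RQU4} by observing that condition $(H_{01})$, i.e. $\Omega^a_+ \cap \Omega^b_+ \neq \emptyset$, gives us a region where both $a$ and $b$ are strictly positive, and on this region the regularized nonlinearity of $(Q_{\lambda,\epsilon})$ can be bounded below by a genuine concave-convex nonlinearity uniformly in $\epsilon \in [0,1]$. First I would fix an open ball $B \Subset \Omega^a_+ \cap \Omega^b_+$; on a slightly smaller ball we have $a \geq a_0 > 0$ and $b \geq b_0 > 0$. The key elementary observation is that for $0 \le \epsilon \le 1$ and $0 \le t$ bounded, $(t+\epsilon)^{q-2} t$ is not uniformly comparable to $t^{q-1}$ near $t=0$ (indeed it vanishes to first order), so the naive comparison fails at small solutions; instead, the right move is to test the equation against a fixed positive eigenfunction and exploit that a positive solution on $B$ cannot be too small where $b>0$.

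Concretely, the second step is the comparison-function argument underlying \cite[Proposition 6.1]{RQU4}: let $\varphi_1 > 0$ be the Dirichlet principal eigenfunction of $-\Delta$ on $B$ with eigenvalue $\mu_1(B)$, extended by zero. For a positive solution $u$ of $(Q_{\lambda,\epsilon})$, multiply the equation by $\varphi_1$ and integrate over $B$, using $\partial u/\partial \mathbf n = 0$ is irrelevant here since $\varphi_1$ vanishes on $\partial B \subset \Omega$; after integrating by parts twice one gets
\begin{align*}
\mu_1(B) \int_B u\, \varphi_1 \; \geq \; \int_B \bigl( \lambda b (u+\epsilon)^{q-2} u + a u^{p-1} \bigr)\varphi_1 \; \geq \; \int_B a u^{p-1}\varphi_1,
\end{align*}
where the boundary term $-\int_{\partial B} u\, \partial\varphi_1/\partial\mathbf n \ge 0$ has been discarded (it has the favorable sign). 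Since $p-1 > 1$, this already forces $\|u\|_{L^\infty(B)}$ to be bounded above independently of $\lambda$ and $\epsilon$. With this upper bound $M$ in hand, on $B$ we have $(u+\epsilon)^{q-2} u \ge (M+1)^{q-2} u$ (as $q-2<0$ and $u+\epsilon \le M+1$), so testing again against $\varphi_1$ yields
\begin{align*}
\mu_1(B)\int_B u\,\varphi_1 \;\geq\; \lambda\, b_0 (M+1)^{q-2}\int_B u\,\varphi_1 \;+\; \int_B a u^{p-1}\varphi_1 \;\geq\; \lambda\, b_0 (M+1)^{q-2}\int_B u\,\varphi_1,
\end{align*}
whence $\lambda \le \mu_1(B)\, b_0^{-1}(M+1)^{2-q} =: \overline\lambda$, which is the desired uniform bound.

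The main obstacle I anticipate is precisely the degeneracy of the regularized term near $u=0$: the clean lower bound $(u+\epsilon)^{q-2}u \gtrsim u^{q-1}$ is false uniformly in $\epsilon$, so one genuinely needs the two-step argument — first an $\epsilon$-uniform $L^\infty$ bound on $u$ over $B$ coming only from the superlinear term $au^{p-1}$, then replacing $(u+\epsilon)^{q-2}$ by the constant $(M+1)^{q-2}$ on the range of $u$. A secondary point to be careful about is that $u$ need not vanish on $\partial B$, so the integrations by parts produce a boundary term $\int_{\partial B} u\,(-\partial\varphi_1/\partial\mathbf n)$ which is nonnegative (outer normal derivative of the principal eigenfunction on $B$ is negative); keeping it on the correct side of the inequality is essential. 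Everything else is a routine invocation of \cite[Proposition 6.1]{RQU4}, whose proof is exactly this computation, so in the write-up I would simply state that the bound transfers verbatim to the regularized family because the estimates there use only $a \ge a_0 > 0$, $b \ge b_0 > 0$ on $B$ and the sign $q-2<0$, all of which hold uniformly for $\epsilon \in [0,1]$.
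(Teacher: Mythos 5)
Your overall strategy --- localize to a ball $B \Subset \Omega^a_+ \cap \Omega^b_+$ (this is exactly where $(H_{01})$ enters), test against the Dirichlet principal eigenfunction $\varphi_1$ of $B$, and discard the boundary term $\int_{\partial B} u\,\partial\varphi_1/\partial\mathbf{n} \leq 0$ --- is the standard one and surely what \cite[Proposition 6.1]{RQU4} does; the paper itself offers no in-text proof beyond that citation. However, your execution has a genuine gap at the pivotal step: the inequality $\mu_1(B)\int_B u\varphi_1 \geq a_0 \int_B u^{p-1}\varphi_1$ does \emph{not} ``force $\|u\|_{L^\infty(B)}$ to be bounded.'' Via Jensen or H\"older it yields only a bound on the weighted integrals $\int_B u\varphi_1$ and $\int_B u^{p-1}\varphi_1$; a weighted $L^1$ bound says nothing about the sup norm (a tall thin spike is not excluded). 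Uniform $L^\infty$ bounds for superlinear indefinite problems are precisely the delicate issue that this paper treats separately in Proposition \ref{bound:norm}, under the extra hypotheses $(H_1)$--$(H_2)$ and only for $\lambda$ in a compact interval, via Gidas--Spruck blow-up. Without the constant $M$, your second step --- replacing $(u+\epsilon)^{q-2}$ by $(M+1)^{q-2}$ --- collapses, and with it the conclusion $\lambda \leq \overline\lambda$.

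The good news is that no $L^\infty$ bound is needed; the fix is purely pointwise and keeps your single test against $\varphi_1$. For $t>0$ and $\epsilon\in[0,1]$ one has $(t+\epsilon)^{q-2} \geq 2^{q-2}$ if $t\leq 1$ and $(t+\epsilon)^{q-2} \geq (2t)^{q-2}$ if $t\geq 1$, so on $B$,
\begin{align*}
\lambda b(x)(t+\epsilon)^{q-2}t + a(x)t^{p-1} \;\geq\; h(\lambda)\,t, \qquad
h(\lambda) := \min\left\{ \lambda b_0 2^{q-2},\; c\, \lambda^{\frac{p-2}{p-q}} \right\},
\end{align*}
where the second branch comes from minimizing $\lambda b_0 2^{q-2}t^{q-2}+a_0 t^{p-2}$ over $t>0$ and $c=c(a_0,b_0,p,q)>0$. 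Since $p>2>q$ we have $h(\lambda)\to\infty$ as $\lambda\to\infty$, uniformly in $\epsilon\in[0,1]$. Feeding this into your first display gives $\mu_1(B)\int_B u\varphi_1 \geq h(\lambda)\int_B u\varphi_1$, which is impossible once $h(\lambda)>\mu_1(B)$, yielding $\overline\lambda$ directly. Note also that your first display already uses $\lambda\geq 0$ to drop the $b$-term; that is harmless here but worth stating. Finally, your opening remark that $(u+\epsilon)^{q-2}u$ is not uniformly comparable to $u^{q-1}$ is correct and is exactly why the two-sided splitting at $t=1$ above (rather than a single power comparison) is the right substitute.
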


The following \textit{a priori} upper bound on the uniform norm of nonnegative 
solutions of $(Q_{\lambda, \epsilon})$ is obtained using a blow up technique from Gidas and Spruck \cite{GS81} and follows from Amann and L\'opez-G\'omez \cite{ALG98} and L\'opez-G\'omez, Molina-Meyer and Tellini \cite{LGMMT13}:

\begin{prop} \label{bound:norm}
Assume $(H_1)$ and $(H_2)$. 
Then for any $\Lambda > 0$ there exists $C_{\Lambda} > 0$ such that $\max_{\overline{\Omega}} u \leq C_{\Lambda}$ for all nonnegative solutions of $(Q_{\lambda, \epsilon})$ with $\lambda \in [-\Lambda, \Lambda]$ and $\epsilon \in [0,1]$. In particular, the conclusion holds for $(P_\lambda)$. 
\end{prop}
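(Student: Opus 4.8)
The plan is to prove Proposition \ref{bound:norm} by a blow-up (rescaling) argument in the spirit of Gidas--Spruck, following the scheme of Amann--L\'opez-G\'omez \cite{ALG98} and \cite{LGMMT13}. Suppose, for contradiction, that the conclusion fails: there exist $\Lambda>0$, a sequence $\epsilon_n\in[0,1]$, $\lambda_n\in[-\Lambda,\Lambda]$, and nonnegative solutions $u_n$ of $(Q_{\lambda_n,\epsilon_n})$ with $M_n:=\max_{\overline{\Omega}}u_n=u_n(x_n)\to\infty$. Passing to a subsequence we may assume $\epsilon_n\to\epsilon_0\in[0,1]$, $\lambda_n\to\lambda_0\in[-\Lambda,\Lambda]$, and $x_n\to x_0\in\overline{\Omega}$. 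I would rescale: set $\mu_n:=M_n^{-(p-2)/2}\to 0$ and
\begin{align*}
v_n(y):=M_n^{-1}\,u_n(x_n+\mu_n y),
\end{align*}
for $y$ in the rescaled domain $\Omega_n:=\mu_n^{-1}(\Omega-x_n)$, so that $0\leq v_n\leq 1$ and $v_n(0)=1$. A direct computation shows $v_n$ solves
\begin{align*}
-\Delta v_n = a(x_n+\mu_n y)v_n^{p-1}
+\lambda_n\, M_n^{q-p}\, b(x_n+\mu_n y)\big(v_n+\epsilon_n M_n^{-1}\big)^{q-2}v_n
\quad\text{in }\Omega_n,
\end{align*}
with the homogeneous Neumann condition on $\partial\Omega_n$ when $x_0\in\partial\Omega$. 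Since $q<p$ and $v_n\leq 1$, $\epsilon_n M_n^{-1}\to 0$, the $b$-term tends to $0$ uniformly on compacts (one must check the singular factor $(v_n+\epsilon_nM_n^{-1})^{q-2}$ does not blow up near the zero set of $v_n$ after multiplying by $v_n$; since $q-1>0$ the product $(v_n+\epsilon_nM_n^{-1})^{q-2}v_n\leq (v_n+\epsilon_nM_n^{-1})^{q-1}\leq$ a bounded quantity, this is fine). Elliptic estimates then give, along a subsequence, $v_n\to v$ in $C^2_{loc}$, with $0\leq v\leq 1$, $v(0)=1$, solving a limit equation on all of $\R^N$ or a half-space $\R^N_+$.

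The heart of the argument is identifying the limit equation and invoking a Liouville-type nonexistence theorem to reach a contradiction. Three cases arise according to the location of $x_0$ relative to the sets $\Omega^a_\pm$ and $\partial\Omega$. If $x_0\in\Omega^a_+$ or $x_0\in\partial\Omega$ with $a(x_0)>0$, the limit solves $-\Delta v=a(x_0)v^{p-1}$ on $\R^N$ (or $\R^N_+$ with Neumann condition, which by even reflection reduces to the whole-space case), $0\leq v\leq 1$, $v\not\equiv 0$; since $2<p<2N/(N-2)$ this contradicts the Gidas--Spruck Liouville theorem. If $x_0\in\Omega^a_-$ or $a(x_0)<0$ at a boundary point, the limit satisfies $-\Delta v=a(x_0)v^{p-1}\leq 0$, i.e. $v$ is a bounded nonnegative superharmonic function on $\R^N$ (or half-space), hence constant, and then $a(x_0)v^{p-1}=0$ forces $v\equiv 0$, contradicting $v(0)=1$. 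The delicate case is $x_0\in\partial\Omega^a_+$ (the interface where $a$ changes sign), which is exactly why hypotheses $(H_1)$--$(H_2)$ are imposed: near $\partial\Omega^a_+$ one has $a^+(x)=\alpha^+(x)\,\mathrm{dist}(x,\partial\Omega^a_+)^\gamma$, and the correct rescaling must incorporate the rate $\gamma$. One rescales with a different exponent so that $a^+$ contributes a factor $\mathrm{dist}(y,\{y_N=0\})^\gamma$ in the limit; the condition $p<(2N+\gamma)/(N-1)$ guarantees that the resulting weighted Liouville problem on a half-space,
\begin{align*}
-\Delta v = c\,(y_N)_+^\gamma\,v^{p-1}\quad\text{in }\R^N,\qquad 0\leq v\leq 1,\ v\not\equiv 0,
\end{align*}
again has no solution, by the weighted Gidas--Spruck result of \cite{ALG98,LGMMT13}.

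Once the blow-up contradiction is established, I would assemble the proof: the a priori bound on $M_n=\max u_n$ follows, uniformly for $\lambda\in[-\Lambda,\Lambda]$ and $\epsilon\in[0,1]$; the constant $C_\Lambda$ is obtained as the supremum of $\max_{\overline\Omega}u$ over this (now bounded) solution set, which is finite precisely because no blow-up sequence exists. Taking $\epsilon=0$ gives the statement for $(P_\lambda)=(Q_{\lambda,0})$. I expect the main obstacle to be the bookkeeping of the interface case $x_0\in\partial\Omega^a_+$: choosing the scaling exponents correctly so that both the Laplacian and the degenerate weight $\mathrm{dist}(\cdot,\partial\Omega^a_+)^\gamma$ survive in the limit, verifying that $\partial\Omega^a_+$ flattens to a hyperplane under the rescaling (using smoothness of $\partial\Omega^a_+$ from $(H_1)$), and handling the subcase where this interface meets $\partial\Omega$. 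The Neumann boundary condition adds the technical point that one must track whether $x_0\in\partial\Omega$ and, if so, perform the blow-up after flattening $\partial\Omega$ and then reflect; but since the limiting nonlinearities are either pure-power or weighted-power with the weight depending on distance to an interior interface, the reflection argument is standard. The precise form of the Liouville theorems and the scaling in the degenerate case are quoted verbatim from \cite{ALG98} and \cite{LGMMT13}, so the novel content here is merely checking that $(Q_{\lambda,\epsilon})$ fits their framework uniformly in $\epsilon$, which the uniform bound $\epsilon_nM_n^{-1}\to 0$ makes transparent.
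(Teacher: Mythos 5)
Your overall strategy (blow-up plus Liouville theorems, uniform in $\epsilon$) is the right family of ideas and is indeed what the cited sources \cite{ALG98,LGMMT13} use; but the paper's proof is organized differently, and for a reason. It performs the blow-up analysis \emph{only} on $\overline{\Omega^a_+}$ (Lemma \ref{lem:bound+}, delegated to the argument of \cite[Theorem 6.3]{LGMMT13}, and \cite[Proposition 6.5]{RQU4} in the case \eqref{H1i}), and then propagates the bound to $\overline{\Omega^a_-}$ by comparison with the explicit supersolution $C(1+w_0)$ of a Dirichlet problem on $\Omega^a_-$ (Lemma \ref{lem:bound.a<0}, via Proposition \ref{app:prop:comparison}), treating the two geometric alternatives \eqref{H1i} and \eqref{H1ii} of $(H_1)$ separately. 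Within your global blow-up, the case $a(x_0)<0$ also contains an error: the limit $v$ satisfies $-\Delta v=a(x_0)v^{p-1}\leq 0$, so $v$ is \emph{sub}harmonic, not superharmonic, and in any event a bounded nonnegative sub- or superharmonic function on $\R^N$ need not be constant when $N\geq 3$ (e.g.\ $1-\min\{1,|x|^{2-N}\}$ is bounded, nonnegative, subharmonic and nonconstant), so the Liouville statement you invoke is false. The conclusion survives because $v$ attains its maximum at $y=0$ (since $v(0)=1=\sup v$), and the strong maximum principle for subharmonic functions (with Hopf's lemma, or even reflection, in the half-space case) forces $v\equiv 1$ and hence $a(x_0)=0$; but that is a different argument from the one you wrote.

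The more serious gap is the interface case $x_0\in\partial\Omega^a_+$: your claimed limit problem $-\Delta v=c\,(y_N)_+^{\gamma}v^{p-1}$ silently discards the contribution of $a^-$. Hypothesis $(H_2)$ prescribes the vanishing rate of $a^+$ at $\partial\Omega^a_+$ only; continuity forces $a^-\to 0$ there as well, but at an arbitrary rate. With the rescaling $\mu_n=M_n^{-(p-2)/(2+\gamma)}$ that keeps the weight $(y_N)_+^{\gamma}$ alive, the rescaled coefficient on the $\Omega^a_-$ side is $M_n^{(p-2)\gamma/(2+\gamma)}\bigl(-a^-(x_n+\mu_n y)\bigr)$, which diverges to $-\infty$ whenever $a^-$ vanishes more slowly than ${\rm dist}(\cdot,\partial\Omega^a_+)^{\gamma}$; there is then no limit PDE on $\{y_N<0\}$, and the weighted Liouville theorem of \cite{ALG98} does not apply as stated. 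One could try to show that $v_n\to 0$ on the negative side and work with a differential inequality on $\{y_N>0\}$ only, but this is precisely the technical difficulty the paper's decomposition is designed to avoid: establishing the bound on $\overline{\Omega^a_+}$ first and then exploiting the absorbing term $-a^-u^{p-1}$ on $\Omega^a_-$ through the comparison principle means one never blows up across the sign-change interface from the negative side. As written, your proposal has a genuine gap at this step; either supply the missing analysis of the rescaled negative part, or adopt the paper's two-step structure.
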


\begin{proof}
The case where \eqref{H1i} holds follows 
by means of Proposition \ref{app:prop:comparison} as in the proof of \cite[Proposition 6.5]{RQU4}, whereas the case where \eqref{H1ii} holds follows from the following lemma:

\begin{lem} \label{lem:bound.a<0} 
Assume $(H_1)$ with \eqref{H1ii}. Assume in addition that for any $\Lambda > 0$ there exists a constant $C_1 > 0$ such that $\max_{\overline{\Omega^a_+}}u \leq C_1$ for all 
nonnegative solutions $u$ of $(Q_{\lambda, \epsilon})$ with $\lambda \in [-\Lambda, \Lambda]$ and $\epsilon \in [0,1]$. Then, for any $\Lambda > 0$ there exists a constant $C_2$ such that $\max_{\overline{\Omega}}u \leq C_2$ for all nonnegative solutions $u$ of $(Q_{\lambda, \epsilon})$ with $\lambda \in [-\Lambda, \Lambda]$ and $\epsilon \in [0,1]$. 
\end{lem}

\begin{proof}
We use a comparison principle. For $\Lambda > 0$ we first consider the case $\lambda \in [0, \Lambda]$. Let $u$ be a 
nonnegative solution of $(Q_{\lambda, \epsilon})$. Then, since $u \leq C_1$ on $\partial \Omega^a_-$ by assumption, $u$ is a subsolution of the problem 
\begin{align} \label{pr:cc2}
\begin{cases}
-\Delta u =  \lambda b(x) (u+\epsilon)^{q-2}u -a^-(x) u^{p-1} 
& \mbox{in $\Omega^a_-$}, \\
u = C_1 & \mbox{on $\partial \Omega^a_-$}. 
\end{cases}
\end{align}
Let $w_0$ be the unique positive solution of the Dirichlet problem
\begin{align} \label{probw0}
\begin{cases}
-\Delta w = 1 & \mbox{in $\Omega^a_-$}, \\
w=0 & \mbox{on $\partial \Omega^a_-$}. 
\end{cases}
\end{align}
Set $w_1 = C(1+w_0)$ with $C>0$. Then $w_1$ is a supersolution of 
\eqref{pr:cc2} if we choose $C$ such that 
\begin{align*}
C^{2-q} = \max \left\{ C_1^{2-q}, \ \ \Lambda \left( \max_{\overline{\Omega^a_-}}b \right) \left(1 + \max_{\overline{\Omega^a_-}}w_0 \right)^{q-1} \right\}.
\end{align*}
Indeed, we observe that
\begin{align*}
-\Delta w_1 + a^- w_1^{p-1} - \lambda b (w_1+\epsilon)^{q-2}w_1 
& \geq C - \Lambda \left( \max_{\overline{\Omega^a_-}}b \right) (C(1+w_0) + \epsilon)^{q-2}C(w_0+1) \\
& \geq C - \Lambda \left( \max_{\overline{\Omega^a_-}}b \right) C^{q-1}(1+w_0)^{q-1} \\
& \geq C^{q-1} \left( C^{2-q} - \Lambda \left( \max_{\overline{\Omega^a_-}}b \right) \left( 1 + \max_{\overline{\Omega^a_-}}w_0 \right)^{q-1} \right) \geq 0. 
\end{align*}
So, the comparison principle (Proposition \ref{app:prop:comparison} in the Appendix) for \eqref{pr:cc2} yields that
\begin{align*}
u\leq C (1+w_0) \leq C \left( 1 + \max_{\overline{\Omega^a_-}}w_0 \right) \quad \mbox{on}\; \overline{\Omega^a_-}. 
\end{align*}

Next we consider the case $\lambda \in [-\Lambda, 0]$. Let $u$ be a 
non-negative solution of $(Q_{\lambda, \epsilon})$. It is straightforward that $u$ is a subsolution of the problem 
\begin{align} \label{prob:cave-}
\begin{cases}
-\Delta u = -a^-(x) u^{p-1} & \mbox{in $\Omega^a_-$}, \\
u = C_1 & \mbox{on $\partial \Omega^a_-$}. 
\end{cases}
\end{align}
Using the unique positive solution $w_0$ of \eqref{probw0}, we see that $C_1(1+w_0)$ is a supersolution of \eqref{prob:cave-}, and thus, from the comparison principle, we deduce again
\begin{align*}
u \leq C_1(1+w_0) \leq C_1 \left( 1 + \max_{\overline{\Omega^a_-}}w_0 \right) \quad \mbox{on}\; \overline{\Omega^a_-}. 
\end{align*}
Summing up, $C_2 = C \left( 1 + \max_{\overline{\Omega^a_-}}w_0 \right)$ yields the desired conclusion.
\end{proof}

The following \textit{a priori} upper bound of the uniform norm on $\overline{\Omega^a_+}$ for 
nonnegative solutions of $(Q_{\lambda, \epsilon})$ can be established in a similar manner as \cite[Theorem 6.3]{LGMMT13}.

\begin{lem} \label{lem:bound+}
Assume $(H_2)$ in addition to $(H_1)$ with \eqref{H1ii}. Then, for any $\Lambda > 0$ there exists a constant $C_1 > 0$ such that $\max_{\overline{\Omega^a_+}}u \leq C_1$ for all 
nonnegative solutions $u$ of $(Q_{\lambda, \epsilon})$ with $\lambda \in [-\Lambda, \Lambda]$ and $\epsilon \in [0,1]$. 
\end{lem}

Lemma \ref{lem:bound+} completes the proof of Proposition \ref{bound:norm} in view of Lemma \ref{lem:bound.a<0}. 
\end{proof}

\subsection{Proof of Theorem \ref{thm:ageq0}}  \label{subsec:thm1.2}
Assertions (1) and (3) follow from Proposition \ref{prop:ageq0:no}. 
By use of the Nehari manifold technique, assertion (2) can be verified in a similar way just as in \cite[Remark 2.2]{RQU4}, relying on the assumption that $\lambda < 0$, $b\geq 0$ and $b\not\equiv 0$. 

We use now a topological method proposed by Whyburn \cite{W67} to prove the existence of a unbounded subcontinuum of nontrivial non-negative solutions of $(P_\lambda)$. Let $0<\epsilon \leq 1$ and $\Lambda > 0$ be fixed. By Proposition \ref{prop:regp:continua} there exists a subcontinuum $\mathcal{C}_\epsilon'$ of positive solutions of $(Q_{\lambda, \epsilon})$ such that 
\[ 
\mathcal{C}_\epsilon' \subset \mathcal{C}_\epsilon \cap \{ (\lambda, u) \in \R \times C(\overline{\Omega}) : 
|\lambda|\leq \Lambda, \ \ \| u \|_{C(\overline{\Omega})} \leq C_\Lambda \}, 
\]
where $C_\Lambda$ is a positive constant given by Proposition \ref{bound:norm}. Then, we have $(0,0) \in \mathcal{C}_\epsilon'$, and there exists $(\lambda_\epsilon, u_\epsilon)\in \mathcal{C}_\epsilon'$ such that $ |\lambda_\epsilon| = \Lambda$. Moreover, since we can prove that $(Q_{\lambda, \epsilon})$ with $\lambda \geq 0$ and $\epsilon \in (0,1]$ has no positive solution arguing as in the proof of Proposition \ref{prop:ageq0:no}(1), we have that $\lambda < 0$ if $(\lambda, u) \in \mathcal{C}_\epsilon' \setminus \{ (0,0)\}$. Consequently, $\lambda_\epsilon = -\Lambda$, see Figure \ref{fig16_0410}.

	\begin{figure}[!htb]
{\unitlength 0.1in
\begin{picture}( 38.4500, 22.4500)( 13.3500,-27.8000)
%
\special{pn 8}%
\special{pa 1950 2440}%
\special{pa 5180 2440}%
\special{fp}%
\special{sh 1}%
\special{pa 5180 2440}%
\special{pa 5114 2420}%
\special{pa 5128 2440}%
\special{pa 5114 2460}%
\special{pa 5180 2440}%
\special{fp}%
%
\special{pn 8}%
\special{pa 3450 2780}%
\special{pa 3450 810}%
\special{fp}%
\special{sh 1}%
\special{pa 3450 810}%
\special{pa 3430 878}%
\special{pa 3450 864}%
\special{pa 3470 878}%
\special{pa 3450 810}%
\special{fp}%
%
\special{pn 8}%
\special{pn 8}%
\special{pa 2430 2440}%
\special{pa 2430 2432}%
\special{fp}%
\special{pa 2430 2395}%
\special{pa 2430 2387}%
\special{fp}%
\special{pa 2430 2350}%
\special{pa 2430 2342}%
\special{fp}%
\special{pa 2430 2305}%
\special{pa 2430 2297}%
\special{fp}%
\special{pa 2430 2259}%
\special{pa 2430 2251}%
\special{fp}%
\special{pa 2430 2214}%
\special{pa 2430 2206}%
\special{fp}%
\special{pa 2430 2169}%
\special{pa 2430 2161}%
\special{fp}%
\special{pa 2430 2124}%
\special{pa 2430 2116}%
\special{fp}%
\special{pa 2430 2079}%
\special{pa 2430 2071}%
\special{fp}%
\special{pa 2430 2034}%
\special{pa 2430 2026}%
\special{fp}%
\special{pa 2430 1989}%
\special{pa 2430 1981}%
\special{fp}%
\special{pa 2430 1944}%
\special{pa 2430 1936}%
\special{fp}%
\special{pa 2430 1898}%
\special{pa 2430 1890}%
\special{fp}%
\special{pa 2430 1853}%
\special{pa 2430 1845}%
\special{fp}%
\special{pa 2430 1808}%
\special{pa 2430 1800}%
\special{fp}%
\special{pa 2430 1763}%
\special{pa 2430 1755}%
\special{fp}%
\special{pa 2430 1718}%
\special{pa 2430 1710}%
\special{fp}%
\special{pa 2430 1673}%
\special{pa 2430 1665}%
\special{fp}%
\special{pa 2430 1628}%
\special{pa 2430 1620}%
\special{fp}%
\special{pa 2430 1582}%
\special{pa 2430 1574}%
\special{fp}%
\special{pa 2430 1537}%
\special{pa 2430 1529}%
\special{fp}%
\special{pa 2430 1492}%
\special{pa 2430 1484}%
\special{fp}%
\special{pa 2430 1447}%
\special{pa 2431 1440}%
\special{fp}%
\special{pa 2468 1440}%
\special{pa 2476 1440}%
\special{fp}%
\special{pa 2513 1440}%
\special{pa 2521 1440}%
\special{fp}%
\special{pa 2558 1440}%
\special{pa 2566 1440}%
\special{fp}%
\special{pa 2603 1440}%
\special{pa 2611 1440}%
\special{fp}%
\special{pa 2649 1440}%
\special{pa 2657 1440}%
\special{fp}%
\special{pa 2694 1440}%
\special{pa 2702 1440}%
\special{fp}%
\special{pa 2739 1440}%
\special{pa 2747 1440}%
\special{fp}%
\special{pa 2784 1440}%
\special{pa 2792 1440}%
\special{fp}%
\special{pa 2829 1440}%
\special{pa 2837 1440}%
\special{fp}%
\special{pa 2874 1440}%
\special{pa 2882 1440}%
\special{fp}%
\special{pa 2919 1440}%
\special{pa 2927 1440}%
\special{fp}%
\special{pa 2965 1440}%
\special{pa 2973 1440}%
\special{fp}%
\special{pa 3010 1440}%
\special{pa 3018 1440}%
\special{fp}%
\special{pa 3055 1440}%
\special{pa 3063 1440}%
\special{fp}%
\special{pa 3100 1440}%
\special{pa 3108 1440}%
\special{fp}%
\special{pa 3145 1440}%
\special{pa 3153 1440}%
\special{fp}%
\special{pa 3190 1440}%
\special{pa 3198 1440}%
\special{fp}%
\special{pa 3235 1440}%
\special{pa 3243 1440}%
\special{fp}%
\special{pa 3280 1440}%
\special{pa 3288 1440}%
\special{fp}%
\special{pa 3326 1440}%
\special{pa 3334 1440}%
\special{fp}%
\special{pa 3371 1440}%
\special{pa 3379 1440}%
\special{fp}%
\special{pa 3416 1440}%
\special{pa 3424 1440}%
\special{fp}%
\special{pa 3461 1440}%
\special{pa 3469 1440}%
\special{fp}%
\special{pa 3506 1440}%
\special{pa 3514 1440}%
\special{fp}%
\special{pa 3551 1440}%
\special{pa 3559 1440}%
\special{fp}%
\special{pa 3596 1440}%
\special{pa 3604 1440}%
\special{fp}%
\special{pa 3642 1440}%
\special{pa 3650 1440}%
\special{fp}%
\special{pa 3687 1440}%
\special{pa 3695 1440}%
\special{fp}%
\special{pa 3732 1440}%
\special{pa 3740 1440}%
\special{fp}%
\special{pa 3777 1440}%
\special{pa 3785 1440}%
\special{fp}%
\special{pa 3822 1440}%
\special{pa 3830 1440}%
\special{fp}%
\special{pa 3867 1440}%
\special{pa 3875 1440}%
\special{fp}%
\special{pa 3912 1440}%
\special{pa 3920 1440}%
\special{fp}%
\special{pa 3957 1440}%
\special{pa 3965 1440}%
\special{fp}%
\special{pa 4003 1440}%
\special{pa 4011 1440}%
\special{fp}%
\special{pa 4048 1440}%
\special{pa 4056 1440}%
\special{fp}%
\special{pa 4093 1440}%
\special{pa 4101 1440}%
\special{fp}%
\special{pa 4138 1440}%
\special{pa 4146 1440}%
\special{fp}%
\special{pa 4183 1440}%
\special{pa 4191 1440}%
\special{fp}%
\special{pa 4228 1440}%
\special{pa 4236 1440}%
\special{fp}%
\special{pa 4273 1440}%
\special{pa 4281 1440}%
\special{fp}%
\special{pa 4319 1440}%
\special{pa 4327 1440}%
\special{fp}%
\special{pa 4364 1440}%
\special{pa 4372 1440}%
\special{fp}%
\special{pa 4409 1440}%
\special{pa 4417 1440}%
\special{fp}%
\special{pa 4454 1440}%
\special{pa 4462 1440}%
\special{fp}%
\special{pa 4499 1440}%
\special{pa 4500 1447}%
\special{fp}%
\special{pa 4500 1484}%
\special{pa 4500 1492}%
\special{fp}%
\special{pa 4500 1529}%
\special{pa 4500 1537}%
\special{fp}%
\special{pa 4500 1574}%
\special{pa 4500 1582}%
\special{fp}%
\special{pa 4500 1620}%
\special{pa 4500 1628}%
\special{fp}%
\special{pa 4500 1665}%
\special{pa 4500 1673}%
\special{fp}%
\special{pa 4500 1710}%
\special{pa 4500 1718}%
\special{fp}%
\special{pa 4500 1755}%
\special{pa 4500 1763}%
\special{fp}%
\special{pa 4500 1800}%
\special{pa 4500 1808}%
\special{fp}%
\special{pa 4500 1845}%
\special{pa 4500 1853}%
\special{fp}%
\special{pa 4500 1890}%
\special{pa 4500 1898}%
\special{fp}%
\special{pa 4500 1936}%
\special{pa 4500 1944}%
\special{fp}%
\special{pa 4500 1981}%
\special{pa 4500 1989}%
\special{fp}%
\special{pa 4500 2026}%
\special{pa 4500 2034}%
\special{fp}%
\special{pa 4500 2071}%
\special{pa 4500 2079}%
\special{fp}%
\special{pa 4500 2116}%
\special{pa 4500 2124}%
\special{fp}%
\special{pa 4500 2161}%
\special{pa 4500 2169}%
\special{fp}%
\special{pa 4500 2206}%
\special{pa 4500 2214}%
\special{fp}%
\special{pa 4500 2251}%
\special{pa 4500 2259}%
\special{fp}%
\special{pa 4500 2297}%
\special{pa 4500 2305}%
\special{fp}%
\special{pa 4500 2342}%
\special{pa 4500 2350}%
\special{fp}%
\special{pa 4500 2387}%
\special{pa 4500 2395}%
\special{fp}%
\special{pa 4500 2432}%
\special{pa 4500 2440}%
\special{fp}%
\put(32.8000,-25.9000){\makebox(0,0){O}}%
\put(45.0000,-26.3000){\makebox(0,0){$\Lambda$}}%
\put(24.4000,-25.8000){\makebox(0,0){$-\Lambda$}}%
\put(36.8000,-12.8000){\makebox(0,0){$C_\Lambda$}}%
\put(51.4000,-22.1000){\makebox(0,0){$\lambda$}}%
\put(34.5000,-6.0000){\makebox(0,0){$\max_{\overline{\Omega}}u$}}%
\put(30.7000,-20.3000){\makebox(0,0){$\mathcal{C}_\epsilon'$}}%
\put(21.5000,-22.2000){\makebox(0,0){$(\lambda_\epsilon, u_\epsilon)$}}%
%
\special{pn 20}%
\special{pa 3450 2440}%
\special{pa 3434 2412}%
\special{pa 3398 2356}%
\special{pa 3358 2304}%
\special{pa 3338 2280}%
\special{pa 3316 2258}%
\special{pa 3292 2238}%
\special{pa 3266 2222}%
\special{pa 3238 2206}%
\special{pa 3210 2194}%
\special{pa 3178 2184}%
\special{pa 3148 2174}%
\special{pa 3114 2168}%
\special{pa 3082 2162}%
\special{pa 3050 2158}%
\special{pa 3016 2154}%
\special{pa 2984 2152}%
\special{pa 2856 2136}%
\special{pa 2824 2130}%
\special{pa 2794 2122}%
\special{pa 2764 2112}%
\special{pa 2708 2084}%
\special{pa 2680 2066}%
\special{pa 2654 2048}%
\special{pa 2626 2030}%
\special{pa 2570 1998}%
\special{pa 2540 1986}%
\special{pa 2510 1978}%
\special{pa 2480 1974}%
\special{pa 2446 1972}%
\special{pa 2430 1970}%
\special{fp}%
%
\special{pn 8}%
\special{pa 2430 1970}%
\special{pa 2358 1978}%
\special{pa 2324 1976}%
\special{pa 2294 1966}%
\special{pa 2268 1950}%
\special{pa 2250 1924}%
\special{pa 2240 1892}%
\special{pa 2244 1866}%
\special{pa 2262 1848}%
\special{pa 2290 1836}%
\special{pa 2326 1830}%
\special{pa 2366 1824}%
\special{pa 2410 1820}%
\special{pa 2454 1814}%
\special{pa 2496 1808}%
\special{pa 2534 1798}%
\special{pa 2570 1788}%
\special{pa 2602 1776}%
\special{pa 2628 1762}%
\special{pa 2646 1746}%
\special{pa 2658 1728}%
\special{pa 2662 1710}%
\special{pa 2656 1690}%
\special{pa 2644 1668}%
\special{pa 2624 1646}%
\special{pa 2600 1624}%
\special{pa 2572 1600}%
\special{pa 2540 1578}%
\special{pa 2506 1556}%
\special{pa 2470 1534}%
\special{pa 2402 1494}%
\special{pa 2368 1476}%
\special{pa 2336 1460}%
\special{pa 2274 1430}%
\special{pa 2214 1404}%
\special{pa 2186 1392}%
\special{pa 2156 1380}%
\special{pa 2130 1370}%
\special{pa 2074 1350}%
\special{pa 2048 1340}%
\special{pa 2020 1330}%
\special{pa 1990 1320}%
\special{fp}%
\put(22.0000,-11.6000){\makebox(0,0){$\mathcal{C}_\epsilon$}}%
\end{picture}}
	  \caption{Situation of the subcontinuum $\mathcal{C}_\epsilon'$.}   
	\label{fig16_0410}
	    \end{figure}
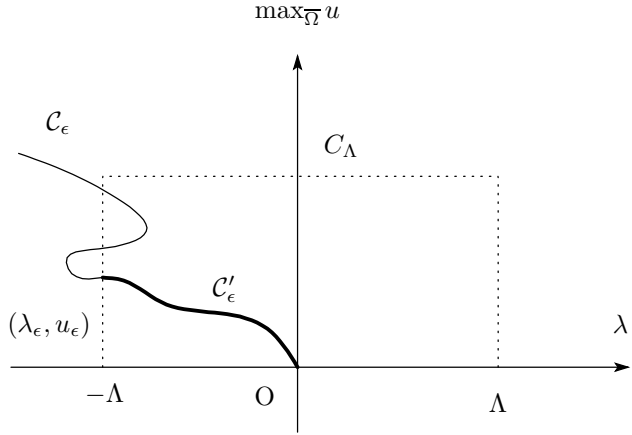 

Arguing as in Section 3 of \cite{RQU3}, we have the following facts:
\begin{itemize}

\item $\displaystyle \bigcup_{0<\epsilon \leq 1} \mathcal{C}_\epsilon'$ is precompact in $C(\overline{\Omega})$;

\item $\displaystyle{(0,0) \in \liminf_{\epsilon \to 0^+} \mathcal{C}_\epsilon'}$, i.e., it is non-empty;

\item up to a subsequence, there holds $(\lambda_\epsilon, u_\epsilon) \to (-\Lambda, u_0)$ in $\R \times C(\overline{\Omega})$, and $u_0$ is a nonnegative  solution of $(P_\lambda)$ for $\lambda = -\Lambda$. 

\end{itemize}
Hence we use 
(9.12) Theorem in page 11 of \cite{W67}, to deduce that $\mathcal{C}_0 := \limsup_{\epsilon \to 0^+}\mathcal{C}_\epsilon'$ is non-empty, closed and connected, i.e., it is a subcontinuum. Furthermore, we can check that $\mathcal{C}_0$ is contained in the set of nonnegative weak solutions of $(P_\lambda)$ (and therefore in the set of nonnegative solutions of $(P_\lambda)$, by elliptic regularity).

Finally, we shall show that $\mathcal{C}_0 \setminus \{ (0,0) \}$ consists of nontrivial non-negative solutions of $(P_\lambda)$. To this end, we prove the following lemma, see Proposition \ref{prop:ageq0:no}(2). 

\begin{lem} \label{lem:Qlam<}
Assume $p \leq \frac{2N}{N-2}$ if $N>2$. Then, for any $\Lambda > 0$, there exists $\delta_0 > 0$ such that $\max_{\overline{\Omega}} u > \delta_0$ for all positive solutions of $(Q_{\lambda, \epsilon})$ with $\lambda \leq -\Lambda$ and $\epsilon \to 0^+$. 
\end{lem}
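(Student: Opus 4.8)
The plan is to mimic the contradiction argument used in Proposition \ref{prop:ageq0:no}(2), checking that the regularizing term $\lambda b(x)(u+\epsilon)^{q-2}u$ behaves, in the relevant limit, no worse than the genuine concave term. Suppose the conclusion fails: there exist $\Lambda>0$, a sequence $\epsilon_n\to 0^+$, reals $\lambda_n\le -\Lambda$, and positive solutions $u_n$ of $(Q_{\lambda_n,\epsilon_n})$ with $\max_{\overline\Omega}u_n\to 0$. Testing the equation against $u_n$ and using $\lambda_n\le 0$, $b\ge 0$ gives
\begin{align*}
\int_\Omega |\nabla u_n|^2 = \int_\Omega a u_n^p + \lambda_n\int_\Omega b (u_n+\epsilon_n)^{q-2}u_n^2 \le \int_\Omega a u_n^p \to 0,
\end{align*}
so $u_n\to 0$ in $H^1(\Omega)$ (the Sobolev embedding $H^1\hookrightarrow L^p$, valid since $p\le \frac{2N}{N-2}$, controls $\int_\Omega a u_n^p$). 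Set $v_n=u_n/\|u_n\|$, and pass to a subsequence with $v_n\rightharpoonup v_0$ in $H^1(\Omega)$; from the energy identity above, $\int_\Omega|\nabla v_n|^2\to 0$, hence $v_n\to v_0$ strongly in $H^1(\Omega)$, $v_0$ is a constant, and $\|v_0\|=1$ forces $v_0>0$.

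Next I would extract a contradiction from the weak formulation
\begin{align*}
\int_\Omega \nabla u_n\nabla\phi = \int_\Omega a u_n^{p-1}\phi + \lambda_n\int_\Omega b(u_n+\epsilon_n)^{q-2}u_n\,\phi,\qquad \forall\phi\in H^1(\Omega),
\end{align*}
dividing by $\|u_n\|$. The first term on the right tends to $0$ because $\|u_n\|^{-1}\|u_n^{p-1}\|$ is controlled by $\|u_n\|^{p-2}\to 0$ (using $H^1\hookrightarrow L^p$ and $p>2$), while the left side tends to $0$ since $v_n\to v_0$ constant. Hence $\lambda_n\|u_n\|^{-1}\int_\Omega b(u_n+\epsilon_n)^{q-2}u_n\,\phi\to 0$. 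Now $\|u_n\|^{-1}(u_n+\epsilon_n)^{q-2}u_n = (u_n+\epsilon_n)^{q-2}v_n$, and since $q<2$ and $u_n+\epsilon_n\to 0$ uniformly, we have $(u_n+\epsilon_n)^{q-2}\to+\infty$ uniformly; together with $|\lambda_n|\ge\Lambda$ and $v_n\to v_0>0$, the quantity $|\lambda_n|\int_\Omega b(u_n+\epsilon_n)^{q-2}v_n\,\phi$ for a fixed $\phi\equiv 1$ is bounded below by $\Lambda\,(\min(u_n+\epsilon_n))^{q-2}\int_\Omega b v_n$, which diverges to $+\infty$ (here $\int_\Omega b>0$ since $b\ge 0$, $b\not\equiv 0$). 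This contradicts the fact that the expression tends to $0$.

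The main obstacle is handling the sign and the blow-up rate of the regularized concave term uniformly in $n$: unlike the case $\epsilon=0$, where one divides by $u_n^{q-1}$ directly, here one must control $(u_n+\epsilon_n)^{q-2}$ from below, which is delicate if $u_n$ and $\epsilon_n$ degenerate at different rates at different points. A clean way around this is to choose the test function $\phi\equiv 1$ (so that $\int_\Omega\nabla u_n\nabla\phi=0$ exactly, and the remaining identity reads $\int_\Omega a u_n^{p-1} = -\lambda_n\int_\Omega b(u_n+\epsilon_n)^{q-2}u_n$) and then estimate: the left-hand side is $O(\|u_n\|_\infty^{p-1})$, while the right-hand side is bounded below by $\Lambda\,\|u_n\|_\infty^{-1}\cdot\|u_n\|_\infty\cdot(\text{something})$—more precisely one compares $(u_n+\epsilon_n)^{q-2}u_n \ge$ a suitable multiple of $u_n^{q-1}$ on the set where $u_n\ge\epsilon_n$ and handles the complementary set by noting $u_n\le\epsilon_n$ there. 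Dividing the identity by $\int_\Omega u_n^{q-1}$ and using $\|u_n\|_\infty\to 0$, $p-1>q-1$ then forces $\lambda_n\int_\Omega b\,(\cdots)\to 0$ against a lower bound bounded away from zero, the desired contradiction. Once this estimate is in place the rest is routine.
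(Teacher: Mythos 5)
Your argument is, in substance, the paper's own proof: test the equation with $\phi\equiv 1$ (so the gradient term drops out exactly), divide the resulting identity by $\|u_n\|$, note that the convex term is $O(\|u_n\|^{p-2})\to 0$, and derive a contradiction from the size of the concave term. One inequality is written backwards, though: since $q-2<0$, the map $t\mapsto t^{q-2}$ is \emph{decreasing}, so pointwise $(u_n+\epsilon_n)^{q-2}\ge \bigl(\max_{\overline{\Omega}}u_n+\epsilon_n\bigr)^{q-2}$; the lower bound you need involves the \emph{maximum} of $u_n+\epsilon_n$, not the minimum (the quantity you wrote, $\Lambda\,(\min(u_n+\epsilon_n))^{q-2}\int_\Omega b v_n$, is an \emph{upper} bound for the term in question). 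The conclusion is unaffected, because $\bigl(\max_{\overline{\Omega}}u_n+\epsilon_n\bigr)^{q-2}\to+\infty$ just as well, while $\int_\Omega b v_n\to v_0\int_\Omega b>0$; this corrected estimate is precisely the one the paper uses. For the same reason, the ``main obstacle'' you raise in your final paragraph is not an obstacle: no comparison between the degeneration rates of $u_n$ and $\epsilon_n$, and no splitting of $\Omega$ into $\{u_n\ge\epsilon_n\}$ and $\{u_n<\epsilon_n\}$, is needed, since the uniform bound $u_n+\epsilon_n\le\max_{\overline{\Omega}}u_n+\epsilon_n\to 0$ already forces $(u_n+\epsilon_n)^{q-2}\to+\infty$ uniformly on $\overline{\Omega}$.
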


\begin{proof}
The proof is carried out with a minor modification of that of Proposition \ref{prop:ageq0:no}(2). Assume that $u_n$ is a positive solution of $(Q_{\lambda_n, \epsilon_n})$ such that $\max_{\overline{\Omega}}u_n \to 0$, $\epsilon_n \to 0^+$, and $\lambda_n \leq -\Lambda$. As in the proof of Proposition \ref{prop:ageq0:no}(2), we deduce $u_n \to 0$ in $H^1(\Omega)$, and then, putting $v_n = \frac{u_n}{\| u_n \|}$, it follows that, up to a subsequence, $v_n \to v_0$ in $H^1(\Omega)$ for some positive constant $v_0$. 

Now, from the assumption of $u_n$, we derive 
\[
\int_\Omega au_n^{p-1} + \lambda_n \int_\Omega b (u_n + \epsilon_n)^{q-2}u_n = 0. 
\]
By multiplying the left hand side by $\| u_n \|^{-1}$, we deduce 
\[
\int_\Omega av_n^{p-1} \| u_n \|^{p-2} + \lambda_n \int_\Omega b(u_n + \epsilon_n)^{q-2}v_n = 0, 
\]
so that 
\[
0\leq \frac{1}{(\max_{\overline{\Omega}} u_n + \epsilon_n)^{2-q}} \int_\Omega bv_n \leq \int_\Omega b(u_n + \epsilon_n)^{q-2}v_n \longrightarrow 0. 
\]
It follows that 
\[
\int_\Omega b v_n \longrightarrow \int_\Omega bv_0 = 0.
\]
Since $v_0$ is a positive constant, we have $\int_\Omega b = 0$, a contradiction.\end{proof}

Now, we end the proof of Theorem \ref{thm:ageq0}. By definition, $(-\Lambda, u_0) \in \mathcal{C}_0$. From Lemma \ref{lem:Qlam<}, it follows that $u_0 \not\equiv 0$, so that $u_0$ is a nontrivial non-negative solution of $(P_\lambda)$ for $\lambda = -\Lambda$. 
%
Combining this assertion, Proposition \ref{prop:ageq0:no}, and the connectivity of $\mathcal{C}_0$, we deduce that $\mathcal{C}_0\setminus \{ (0,0)\}$ is contained in the set of nontrivial non-negative solutions of $(P_\lambda)$. Since $\Lambda$ is arbitrary, assertion (4) of this theorem follows, and now, $\mathcal{C}_0$ is the desired subcontinuum. We have finished the proof of Theorem \ref{thm:ageq0}. \qed

\subsection{Proof of Theorem \ref{maint}} 
The argument is similar. Assertion (1) follows from Proposition \ref{prop:a<0:no}, whereas Assertion \eqref{Lam0finite} follows from Proposition \ref{bound:lam}. Assertions (2) through (4), except \eqref{Lam0finite} and Assertion (4)(e), can be proved similarly as \cite[Theorem 1.1]{RQUA}. Assertion (4)(e) is verified carrying out the argument in \cite[Proposition 5.2(4)]{RQUA} for $\lambda > 0$, and the one in Assertion (2) of Theorem \ref{thm:ageq0} for $\lambda < 0$. Assertion (6) follows from Proposition \ref{bound:norm}. 

Now it remains to verify Assertion (5). To prove the uniqueness of a positive solution of $(P_\lambda)$ for $\lambda > 0$, we first reduce $(P_\lambda)$ to an equation with a nonlinear, compact and increasing mapping, as follows. If $u$ is a positive solution of $(P_\lambda)$ then, for a constant $\omega > 0$, we have
$$
u = K \left( \omega u + a(x) u^{p-1} + \lambda b(x) u^{q-1} \right) =: K F_\omega (u) \quad \mbox{in $C(\overline{\Omega})$}, 
$$
where $K : C(\overline{\Omega}) \to C^1(\overline{\Omega})$ is the compact mapping defined as the resolvent of the linear Neumann problem 
\begin{align*}
\begin{cases}
(-\Delta + \omega) u = \psi & \mbox{in $\Omega$}, \\
\frac{\partial u}{\partial \mathbf{n}} = 0 & \mbox{on $\partial \Omega$}.
\end{cases}
\end{align*}
More precisely, for any $\psi \in C^\theta (\overline{\Omega})$, $\theta \in (0,1)$, $K\psi \in C^{2+\theta}(\overline{\Omega})$ is the unique solution of the linear problem above. Moreover, $K$ is known to be \textit{strongly positive}, i.e. for $u \geq 0$ satisfying $u\not\equiv 0$ we have $Ku > 0$ on $\overline{\Omega}$ (we denote it by $Ku \gg 0$).

Next we shall observe that 
\begin{align}
& \mbox{for $C > 0$, $F_\omega (u)$ is non-decreasing in $0\leq u \leq C$ if $\omega$ is large enough}, \label{F:inc}\\ 
& \mbox{$F_\omega (\tau u) \geq \tau F_\omega (u)$ (and $\not\equiv \tau F_\omega (u))$ for $\tau \in (0,1)$ and $u\gg 0$.}  \label{F:subl}
\end{align}
We derive \eqref{F:inc} from the slope condition of $F_\omega$. Indeed, we see that if $0\leq u\leq v\leq C$ then 
\begin{align*}
\omega u + a(x) u^{p-1} - \{ \omega v + a(x) v^{p-1} \} 
= (u-v) \left\{ \omega + a(x)\frac{u^{p-1} - v^{p-1}}{u-v} \right\} \leq 0, 
\end{align*}
provided that $\omega$ is large. We derive \eqref{F:subl} by the direct computation
\begin{align*}
F_\omega (\tau u) - \tau F_\omega (u) 
= - a(x) \tau u^{p-1} (1-\tau^{p-2}) + \lambda b(x) \tau^{q-1} u^{q-1} (1 - \tau^{2-q}) 
\geq 0 \ (\mbox{and $\not\equiv 0$}). 
\end{align*}

Now we use a uniqueness argument from the proof of \cite[Theorem 24.2]{Am76f}.
Let $\lambda > 0$, $u_1$ be the minimal positive solution of $(P_\lambda)$, and $u_2$ another positive solution of $(P_\lambda)$. Then we have $u_1\leq u_2$. Assume by contradiction that $u_1 \not\equiv u_2$. Then, since $u_1 \gg 0$, there exists $\tau_0 \in (0,1)$ such that $u_1 - \tau_0 u_2 \geq 0$ but $u_1 - \tau_0 u_2 \in \partial P$, where $P = \{ u \in C(\overline{ \Omega}) : u \geq 0 \}$ denotes the positive cone of $C(\overline{\Omega})$ and $\partial P$ the boundary of $P$. Note that if $u \gg 0$ then $u$ is an interior point of $P$. Take a constant $C>0$ such that $u_1, u_2 \leq C$. Using \eqref{F:inc} and \eqref{F:subl} and the fact that $K$ is strongly positive, we deduce that
\begin{align*}
u_1 = KF_\omega (u_1) \geq KF_\omega (\tau_0 u_2) \gg \tau_0 KF_\omega (u_2) 
= \tau_0 u_2, 
\end{align*}
where $u\gg v$ means $u-v \gg 0$. Hence $u_1 - \tau_0 u_2$ is an interior point of $P$, which contradicts $u_1 - \tau_0 u_2 \in \partial P$. Consequently, $u_1\equiv u_2$, and the uniqueness holds. 

Moreover, under $(H_{02})$, the implicit function theorem is applicable at any positive solution of $(P_\lambda)$ with $\lambda > 0$. Therefore, based on assertion (1), we deduce that $\mathcal{C}_0 \setminus \{ (0,0) \} = \{ (\lambda, \underline{u}_\lambda) : 0< \lambda < \Lambda_0 \}$.

To prove $\Lambda_0 = \infty$, we establish an \textit{a priori} bound for positive solutions of $(P_\lambda)$ in a similar way as Proposition \ref{prop:ageq0:no}(2). For the sake of a contradiction we may assume $|\lambda_n| \leq \Lambda$, $\| u_n \| \to \infty$, and $u_n$ is a positive solution for $\lambda = \lambda_n$. Since 
\begin{align*}
\int_\Omega |\nabla u_n|^2 = \int_\Omega au_n^p + \lambda_n \int_\Omega b u_n^q \leq \lambda_n \int_\Omega b u_n^q, 
\end{align*}
we deduce 
$\limsup_n \int_\Omega |\nabla v_n|^2 \to 0$, where $v_n = \frac{u_n}{\| u_n \|}$. Hence we may assume that $v_n \to v_0$ for some $v_0 \in H^1(\Omega)$ and $v_0$ is a positive constant.  Also we have $v_n \to v_0$ in $L^{p-1}(\Omega)$. On the other hand, we see that
\begin{align*}
\int_\Omega \nabla u_n \nabla \phi = \int_\Omega au_n^{p-1}\phi + \lambda_n \int_\Omega bu_n^{q-1}\phi, \quad \forall \phi \in H^1(\Omega). 
\end{align*}
It follows that $\int_\Omega av_n^{p-1}\phi \to 0$, so that $\int_\Omega av_0^{p-1}\phi = 0$ for every $\phi \in H^1(\Omega)$. Hence we have $av_0^{p-1} \equiv 0$. Since $v_0$ is a positive constant, this contradicts the assumption $a\not\equiv 0$. Therefore we have proved that for any $\Lambda >0$ there exists $C_\Lambda > 0$ such that if $u$ is a positive solution of $(P_\lambda)$ with $\lambda \in [-\Lambda, \Lambda]$ then $\| u \| \leq C_\Lambda$, and thus, $\| u \|_{C(\overline{\Omega})} \leq C$ for some $C>0$ by elliptic regularity, as desired. 
By combining the \textit{a priori} bound and the use of the implicit function theorem, we verify assertion (5). 
 
The proof of Theorem  \ref{maint} is now complete. \qed \\

We conclude with the following remark on Theorems \ref{thm:ageq0} and \ref{maint}:

\begin{rem}{\rm 
Consider $(P_\lambda)$ with $q=1,2$. These cases do not correspond to a concave-convex nonlinearity but it is worthwhile discussing 
the nontrivial non-negative solutions set of $(P_\lambda)$. We may check that $(P_\lambda)$ still has a subcontinuum $\mathcal{C}_0$ of solutions such that $\mathcal{C}_0 \setminus \{ (0,0) \}$ consists of 
nontrivial non-negative solutions (with the same nature as in the case $q \in (1,2)$). 

\begin{enumerate}
\item \underline{Case $q=1$:} \ \ In this case, $\lambda b(x) u^{q-1} = \lambda b(x)$ does not depend on $u$, so that $(P_\lambda)$ no longer possesses the trivial line of solutions $\{ (\lambda, 0)\}$. However, when $\int_\Omega a < 0$, we can prove the existence of a subcontinuum $\mathcal{C}_1 = \{ (\lambda, u) \}$ of 
non-negative solutions bifurcating at $(0,0)$ to $\lambda > 0$ and such that $\mathcal{C}_1 \setminus \{ (0,0)\}$ consists of positive solutions of $(P_\lambda)$ 
when $\lambda \geq 0$. To this end, we carry out again the Whyburn topological argument developed in Subsection \ref{subsec:thm1.2}. Let $\mathcal{C}_q = \{ (\lambda, u)\}$, $q\in (1,2)$, be the unbounded subcontinuum of 
positive solutions of $(P_\lambda)$ bifurcating at $(0,0)$, as provided by Theorem \ref{maint}. 
Then, the topological argument in Subsection \ref{subsec:thm1.2} holds with $\epsilon$ replaced by $q$ for $\lambda \geq 0$. Note that $\overline{\lambda}$ given by Proposition \ref{bound:lam} and $C_\Lambda$ given by Proposition \ref{bound:norm} are determined uniformly as $q \to 1^+$. Moreover, we can check in the same way that  
assertions (1) through (6) in Theorem \ref{maint} hold true for $q=1$. Consequently, 
$\displaystyle \mathcal{C}_1 = \limsup_{q\to 1^+} \mathcal{C}_q|_{\lambda \geq 0}$ is our desired subcontinuum. \\

\item \underline{Case $q=2$:}\ \ In this case, $\lambda b(x) u^{q-1} = \lambda b(x) u$ is linear. There is a large literature on this case, with many results on the positive solutions set. Indeed, the general global bifurcation theory due to Rabinowitz provides the existence of a unbounded subcontinuum $\mathcal{C}_2 = \{ (\lambda, u) \}$ of solutions of $(P_\lambda)$ bifurcating at $(0,0)$ and such that $\mathcal{C}_2 \setminus \{ (0,0) \}$ consists of positive solutions. Furthermore, assertions (1) through (4) in Theorem \ref{thm:ageq0} and assertions (1) through (6) in Theorem \ref{maint} are verified in the same way, except the assertion $\Lambda_0 = \infty$ in Theorem \ref{maint}(5). Actually, this assertion is not true in general for $q=2$. Indeed, when $(H_{02})$ is satisfied, we know the following two results:
\begin{itemize}

\item If $a<0$ on $\overline{\Omega}$ then $\Lambda_0 = \infty$ (see Amann \cite[Theorem 25.4]{Am76f}).

\item Assume that $\{ x \in \Omega : a(x) = 0 \}\not= \emptyset$ and $b\equiv 1$. 
Assume additionally that $D_0 := \Omega \setminus \overline{\Omega^a_-}$ is a smooth subdomain of $\Omega$ bounded away from $\partial \Omega$. Consider the smallest eigenvalue $\lambda_1(D_0) > 0$ of the Dirichlet eigenvalue problem 
\begin{align*}
\begin{cases}
-\Delta \phi = \lambda \phi & \mbox{in $D_0$}, \\
\phi = 0 & \mbox{on $\partial D_0$}.
\end{cases}
\end{align*}
Then $\Lambda_0 = \lambda_1(D_0)$ and the minimal positive solution $\underline{u}_\lambda$ 
grows up to infinity in $C(\overline{\Omega})$ as $\lambda \to \lambda_1(D_0)^-$. 
Moreover, there is no positive solution of $(P_\lambda)$ for any $\lambda \geq \lambda_1(D_0)$ (see Ouyang \cite[Theorem 3]{O92}). \\ 

\end{itemize}
\end{enumerate}

On the other hand, it would be difficult to consider the limiting case $p = 2$ by the same approach as in the cases $q=1,2$, since our argument essentially uses the condition $p>2$. Indeed, we do not know whether Proposition 2.1(2) and Proposition 2.2 remain true for the case $p=2$. Thus, in the case $p=2$, one should follow another approach to study bifurcation from zero. \\ 
}\end{rem}


\medskip
\appendix 

\section{A slight variant of the comparison principle for concave problems} 
\medskip
In this Appendix we provide a variant of the comparison principle 
proved by Ambrosetti, Brezis and Cerami \cite[Lemma 3.3]{ABC} to mixed Dirichlet and Neumann nonlinear boundary conditions. We consider the general boundary value problem 
\begin{align} \label{gmp}
\begin{cases}
-\Delta u = f(x,u) & \mbox{in $D$}, \\
\frac{\partial u}{\partial \mathbf{n}} = g(x,u) & \mbox{on $\Gamma_1$}, \\ 
u=C_1 & \mbox{on $\Gamma_0$},  
\end{cases}
\end{align}
where:
\begin{itemize}
\item $D$ is a bounded domain of $\R^N$ with smooth boundary $\partial D$.
\item $\Gamma_0, \Gamma_1 \subset \partial D$ are disjoint, open, and smooth $(N-1)$ dimensional surfaces of $\partial D$.
\item $\overline{\Gamma_0}, \overline{\Gamma_1}$ are compact manifolds with $(N-2)$ dimensional closed boundary $\gamma = \overline{\Gamma_0} \cap \overline{\Gamma_1}$ such that $\partial D = \Gamma_0 \cup \gamma \cup \Gamma_1$.
\item $f: \overline{\Omega}\times [0,\infty) \to \R$ and $g: \overline{\Gamma_1} \times [0,\infty) \to \R$ are continuous. 
\item $C_1$ is a non-negative constant. \\
\end{itemize} 

The result \cite[Proposition A.1]{RQU2} can be slightly relaxed as follows:

\begin{prop} \label{app:prop:comparison}
Under the above conditions, assume that for every $x \in D$, $t \mapsto \frac{f(x,t)}{t}$ is decreasing in $(0,\infty)$, and for every $x \in \Gamma_1$, $t \mapsto \frac{g(x,t)}{t}$ is non-increasing in $(0,\infty)$. Let $u,v \in H^1 (D) \cap C(\overline{D})$ be non-negative functions satisfying $u\leq C_1 \leq v$ on $\Gamma_0$, and 
\begin{align} \label{ineq:sub}
\int_D \nabla u \nabla \varphi - \int_D f(x,u) \varphi - \int_{\Gamma_1} g(x,u) \varphi \leq \ 0, \quad \forall \varphi \in H_{\Gamma_0}^1(D) \ \ \mbox{such that} \ \ \varphi \geq 0, 
\end{align}
\begin{align}\label{ineq:super}
\int_D \nabla v \nabla \varphi - \int_D f(x,v) \varphi - \int_{\Gamma_1} g(x,v) \varphi \geq \ 0, \quad \forall \varphi \in H_{\Gamma_0}^1(D) \ \ \mbox{such that} \ \ \varphi \geq 0.  
\end{align}
If $v > 0$ in $D$, then $u\leq v$ on $\overline{D}$. 
\end{prop}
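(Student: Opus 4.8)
The plan is to adapt the classical Brezis--Oswald / Ambrosetti--Brezis--Cerami argument for sublinear problems to the mixed Dirichlet--Neumann setting, using the Picone-type identity (or equivalently, testing the weak formulations against suitably truncated multiples of $u$ and $v$). First I would reduce to the case $u>0$ in $D$ as well: replacing $u$ by $(1-\delta)u$ for small $\delta$ is not quite enough since the strict monotonicity of $f(x,t)/t$ must be used, so instead I would work directly with the possibly-vanishing subsolution $u$ and exploit that $v>0$ in $D$ to keep all quotients $u/v$, $v/u$ well defined where needed. The key test functions are, for $\eta>0$,
\begin{align*}
\varphi_1 = \frac{(u^2 - v^2)^+}{u+\eta} \in H^1_{\Gamma_0}(D), \qquad \varphi_2 = \frac{(u^2-v^2)^+}{v} \in H^1_{\Gamma_0}(D),
\end{align*}
which are legitimate nonnegative elements of $H^1_{\Gamma_0}(D)$ because $u\le C_1\le v$ on $\Gamma_0$ forces $(u^2-v^2)^+$ to vanish on $\Gamma_0$, and because $v$ is bounded below by a positive constant on the support of $(u^2-v^2)^+$ (which is compactly contained in $D\cup\Gamma_1$ by continuity, once we know $v>0$ in $D$ — the behaviour near $\Gamma_1$ is handled by the Neumann term and the monotonicity of $g(x,t)/t$).

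Next I would plug $\varphi_1$ into the subsolution inequality \eqref{ineq:sub} and $\varphi_2$ into the supersolution inequality \eqref{ineq:super}, then subtract. On the left-hand sides one gets, after the standard algebraic manipulation, a gradient term of the form
\begin{align*}
\int_{\{u>v\}} \left( |\nabla u|^2 - |\nabla v|^2 - \nabla(u^2-v^2)\cdot \nabla\!\left(\tfrac{v}{u}\right) \cdots \right),
\end{align*}
which the Picone identity shows is nonnegative (this is where one lets $\eta\to 0^+$ and uses Fatou/dominated convergence to remove the regularization). On the right-hand sides, the $f$-terms combine to
\begin{align*}
\int_{\{u>v\}} \left( \frac{f(x,u)}{u} - \frac{f(x,v)}{v} \right)(u^2 - v^2),
\end{align*}
which is $\le 0$ since $t\mapsto f(x,t)/t$ is decreasing and $u>v>0$ there; the boundary $g$-terms similarly combine to something $\le 0$ on $\Gamma_1\cap\{u>v\}$ using that $t\mapsto g(x,t)/t$ is non-increasing. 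Putting the two sides together yields $\int_{\{u>v\}}|\nabla(u^2-v^2)^+|^2\cdot(\text{positive factor}) \le 0$ together with the sign information on the nonlinear terms, forcing the set $\{u>v\}$ to have measure zero, hence $u\le v$ a.e., and then everywhere on $\overline D$ by continuity.

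The main obstacle I anticipate is the justification of the test functions and the passage to the limit near the sets where $u$ or $v$ degenerates: $u$ may vanish in $D$, so $\varphi_1$ genuinely needs the $\eta$-regularization and one must check the limiting identity carefully (controlling $\nabla u / (u+\eta)$ uniformly in $L^2$ on $\{u>v\}$, where $v>0$ gives a lower bound on $u+\eta$ only after using $u>v$); and the interplay with $\Gamma_1$, where neither a Dirichlet condition nor a sign condition on $u$ is available a priori, must be absorbed entirely through the monotonicity of $g(x,\cdot)/\cdot$ and the fact that $\varphi_i \in H^1_{\Gamma_0}(D)$ (so they are admissible on $\Gamma_1$). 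Once these technical points are in place, the Picone/sublinearity comparison is routine, and the proof closes exactly as in \cite[Lemma 3.3]{ABC} and \cite[Proposition A.1]{RQU2}, the only new ingredient being the bookkeeping for the Neumann portion $\Gamma_1$ of the boundary.
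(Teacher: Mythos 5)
Your route---testing with the Picone-type quotients $(u^2-v^2)^+/u$ and $(u^2-v^2)^+/v$ in the spirit of Brezis--Oswald and D\'{\i}az--Sa\'a---is genuinely different from the paper's, which follows Ambrosetti--Brezis--Cerami: the paper tests \eqref{ineq:sub} with $v\,\theta_\varepsilon(u-v)$ and \eqref{ineq:super} with $u\,\theta_\varepsilon(u-v)$, where $\theta_\varepsilon$ is a smoothed Heaviside function, and never divides by $u$ or $v$. That choice is not cosmetic here, and it is precisely where your proposal has a real gap. First, your $\varphi_2=(u^2-v^2)^+/v$ is not known to lie in $H^1_{\Gamma_0}(D)$: the hypothesis is only $v>0$ in the \emph{open} set $D$, so $v$ may vanish on $\Gamma_1$ (and, if $C_1=0$, approach zero near $\Gamma_0$) while $u$ need not; your claim that the support of $(u^2-v^2)^+$ is compactly contained in $D\cup\Gamma_1$ with $v$ bounded below there does not follow from the hypotheses. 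You regularize the denominator of $\varphi_1$ but not of $\varphi_2$, so as written $\varphi_2$ is not an admissible test function, and the boundary term $\int_{\Gamma_1}g(x,v)\varphi_2$ is not even defined where $v=0$ on $\Gamma_1$. Second, even after regularizing both denominators, the sum-of-squares (Picone) structure of the gradient term is destroyed by the shifts $u+\eta$, $v+\eta$: the cross terms no longer have a sign before the limit, and the proposed appeal to Fatou/dominated convergence requires an integrable majorant for quantities such as $v^2|\nabla u|^2/(u+\eta)^2$ on $\{u>v\}$, which is not available a priori since $u$ may vanish inside $D$. These are exactly the degeneracies the paper's truncation argument avoids: its test functions are products, manifestly in $H^1\cap L^\infty$ and vanishing on $\Gamma_0$, and the only limit taken is $\varepsilon\to0^+$ in an inequality whose left-hand side is bounded by $C\varepsilon$ via $\gamma_\varepsilon(t)=\int_0^t s\,\theta_\varepsilon'(s)\,ds\leq\varepsilon$.

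The endgame of your outline is sound: once one has a legitimate inequality bounding a nonnegative quantity by $\int_{\{u>v\}}\bigl(\frac{f(x,u)}{u}-\frac{f(x,v)}{v}\bigr)(u^2-v^2)$ plus a nonpositive boundary contribution, the strict monotonicity of $t\mapsto f(x,t)/t$ forces $\{u>v\}$ (intersected with $\{u>0\}$) to be null, and continuity finishes the proof exactly as in the paper. So either repair the Picone route with a symmetric regularization of both quotients and a genuinely justified passage $\eta\to0^+$, or switch to the truncation test functions, which sidestep the division altogether.
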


\begin{rem}{\rm 
\strut
\begin{enumerate}

\item In \cite[Proposition A.1]{RQU2} the case $C_1 = 0$ has been considered. 

\item Assume additionally that $f, g$ are smooth enough. 
If a non-negative function $u \in C^2(\overline{\Omega})$ satisfies
\begin{align*}
\begin{cases}
-\Delta u \leq f(x,u) & \mbox{in $D$}, \\
u\leq C_1 & \mbox{on $\Gamma_0$}, \\
\frac{\partial u}{\partial \mathbf{n}} \leq g(x,u) & \mbox{on $\Gamma_1$}, 
\end{cases}
\end{align*}
then $u$ satisfies \eqref{ineq:sub}. Similarly if the opposite inequalities hold then $u$ satisfies \eqref{ineq:super}. 

\item $\Gamma_0 = \emptyset$ (or alternatively $\Gamma_1 = \emptyset$) is allowed. 
\end{enumerate}
}\end{rem}

\begin{proof} 
Let $\theta: \R \rightarrow \R$,  be a nonnegative nondecreasing smooth function such that $\theta (t)=0$ for $t\leq 0$ and $\theta (t) = 1$ for $t\geq 1$. For $\varepsilon > 0$ we set 
$\theta_\varepsilon (t) = \theta (t/\varepsilon)$. Since $u-v \leq 0$ on $\Gamma_0$, we have $v \theta_\varepsilon (u-v) \in H_{\Gamma_0}^1(D)$, so that 
\begin{align} \label{u:ineq}
\int_D \nabla u \nabla (v \theta_\varepsilon (u-v)) - \int_D f(x,u)v \theta_\varepsilon (u-v) - \int_{\Gamma_1} g(x,u) v \theta_\varepsilon (u-v) \leq 0. 
\end{align}
Likewise, since $u \theta_\varepsilon (u-v) \in H_{\Gamma_0}^1(D)$, 
we have 
\begin{align} \label{v:ineq}
\int_D \nabla v \nabla (u \theta_\varepsilon (u-v)) - \int_D f(x,v)u \theta_\varepsilon (u-v) - \int_{\Gamma_1} g(x,v) u \theta_\varepsilon (u-v) \geq 0. 
\end{align}
Let $\Gamma_1^+ = \{ x \in \Gamma_1 : u, v > 0 \}$, and 
$D^+ = \{ x \in D : u > 0 \}$. 
Since $t \mapsto \frac{g(x,t)}{t}$ is non-increasing in $(0,\infty)$, we have $g(x,0)\geq 0$, which combined with \eqref{u:ineq} and \eqref{v:ineq} 
yields
\begin{align*}
& \int_D  u \theta_\varepsilon^\prime (u-v) \nabla v (\nabla u - \nabla v) 
- \int_D  v \theta_\varepsilon^\prime (u-v) \nabla u (\nabla u - \nabla v) 
\\
& \geq \int_{D^+} uv \left( \frac{f(x,v)}{v} - \frac{f(x,u)}{u} \right) 
\theta_\varepsilon (u-v) 
+ \int_{\Gamma_1^+} uv \left( \frac{g(x,v)}{v} - \frac{g(x,u)}{u} \right) \theta_\varepsilon (u-v) \\
& \geq \int_{D^+} uv \left( \frac{f(x,v)}{v} - \frac{f(x,u)}{u} \right) 
\theta_\varepsilon (u-v). 
\end{align*}
From $-\int_D u \theta_\varepsilon^\prime (u-v) |\nabla (u-v)|^2 \leq 0$, it follows that  
\begin{align} \label{ineq140826}
\int_D  (u-v) \theta_\varepsilon^\prime (u-v) \nabla u\nabla (u-v) 
\geq \int_{D^+} uv \left( \frac{f(x,v)}{v} - \frac{f(x,u)}{u} \right) 
\theta_\varepsilon (u-v). 
\end{align}

Now, we introduce $\gamma_\varepsilon (t) = \int_0^t s \theta_\varepsilon^\prime (s) ds$ for $t\in \R$. We have then $0\leq \gamma_\varepsilon (t) \leq \varepsilon$, $t \in \R$. Note that $\nabla (\gamma_\varepsilon (u-v)) = (u-v) \theta_\varepsilon^\prime (u-v) \nabla (u-v)$. Hence, from \eqref{ineq140826} 
we deduce that 
\begin{align*}
\int_D \nabla u \nabla (\gamma_\varepsilon (u-v)) 
\geq \int_{D^+} uv \left( \frac{f(x,v)}{v} - \frac{f(x,u)}{u} \right) 
\theta_\varepsilon (u-v). 
\end{align*}
Now, since $\gamma_\varepsilon (u-v) \in H_{\Gamma_0}^1(D)$ and 
$\gamma_\varepsilon (u-v) \geq 0$, we note that
\begin{align*}
\int_D \nabla u \nabla (\gamma_\varepsilon (u-v)) 
- \int_D f(x,u) \gamma_\varepsilon (u-v) 
- \int_{\Gamma_1} g(x,u) \gamma_\varepsilon (u-v) \leq 0, 
\end{align*}
and combining the two latter assertions, we get
\begin{align*}
\int_D f(x,u) \gamma_\varepsilon (u-v) + \int_{\Gamma_1} g(x,u) \gamma_\varepsilon (u-v) \geq \int_{D^+} uv \left( \frac{f(x,v)}{v} - \frac{f(x,u)}{u} \right) \theta_\varepsilon (u-v). 
\end{align*}
Since $\gamma_\varepsilon (t)\leq \varepsilon$, there exists a constant $C>0$ such that 
\begin{align} \label{ineq:Cvarep}
C\varepsilon \geq \int_{D^+} uv \left( \frac{f(x,v)}{v} - \frac{f(x,u)}{u} \right) \theta_\varepsilon (u-v). 
\end{align}
Since $t \mapsto \frac{f(x,t)}{t}$ is decreasing in $(0,\infty)$, 
we use Fatou's lemma to deduce from \eqref{ineq:Cvarep} that  
\begin{align*}
\int_{D^+} \liminf_{\varepsilon \to 0^+}\, uv \left( \frac{f(x,v)}{v} - \frac{f(x,u)}{u} \right) \theta_\varepsilon (u-v) \leq 0. 
\end{align*}
Note that
\begin{align*}
\lim_{\varepsilon \to 0^+} \theta_\varepsilon (u-v) = \left\{ 
\begin{array}{ll}
1, & u > v, \\
0, & u\leq v, 
\end{array} \right. 
\end{align*}
so that 
\begin{align*}
\int_{D^+ \cap \{ u>v \}} uv \left( \frac{f(x,v)}{v} - \frac{f(x,u)}{u} \right) \leq 0. 
\end{align*}
Using again that $t \mapsto \frac{f(x,t)}{t}$ is decreasing in $(0,\infty)$, we conclude 
that $|D^+ \cap \{ u>v \}|=0$, and since $u \equiv 0<v$ in $D \setminus D^+$, we have $u\leq v$ a.e.\ in $D$. By continuity, the desired conclusion follows. 
\end{proof}


\end{document}